\newtheorem{theorem}{Theorem}[section]
\newtheorem{proposition}[theorem]{Proposition}
\newtheorem{fact}[theorem]{Fact}
\theoremstyle{definition}
\newtheorem{definition}[theorem]{Definition}
\newtheorem{remark}[theorem]{Remark}
\newtheorem{question}[theorem]{Question}
\newtheorem{observation}[theorem]{Observation}
\theoremstyle{remark}
\newcommand{\defemph}{\textit}
\newcommand{\cf}{\mbox{cf}}
\newcommand{\mc}{\mathcal}
\newcommand{\mbb}{\mathbb}
\newcommand{\forces}{\Vdash}
\newcommand{\Lauchli}{L{\"{a}}uchli}
\DeclareMathOperator{\Dom}{Dom}
\title{Forcing and the Halpern-L\"auchli Theorem}
\author{Natasha Dobrinen}
\address{Department of Mathematics\\
 University of Denver \\
C.M.\ Knudson Hall, Room 300\\
2390 S.\ York St.\\ Denver, CO \ 80208 U.S.A.}
\email{natasha.dobrinen@du.edu}
  \urladdr{\url{http://web.cs.du.edu/~ndobrine}}
\author{Daniel Hathaway}
\address{Department of Mathematics\\
 University of Vermont \\
 16 Colchester Ave. \\
 Burlington, VT \ 05401 U.S.A.}
\email{Daniel.Hathaway@uvm.edu}
 \urladdr{\url{http://mysite.du.edu/~dhathaw2/}}
\subjclass[2010]{Fill in}
\thanks{The first author was partially supported from  National Science Foundation Grant DMS-1600781}
\begin{document}

\begin{abstract}
We investigate the  effects
of various  forcings  on several forms of the
 Halpern-L\"auchli Theorem.
For  inaccessible $\kappa$, we show they
 are preserved by forcings of size  less than $\kappa$.
Combining this with work of
 Zhang in \cite{Zhang17} yields
 that
the polarized partition
 relations associated with  finite products of the $\kappa$-rationals
are   preserved by all forcings
 of size less than $\kappa$ over models satisfying the Halpern-L\"auchli  Theorem at $\kappa$.
We also show that the
 Halpern-L\"auchli Theorem
 is preserved by
 ${{<}\kappa}$-closed forcings
 assuming $\kappa$ is measurable, following  some observed reflection properties.
\end{abstract}

\maketitle

\section{Introduction}

The Halpern-\Lauchli\ Theorem
\cite{Halpern/Lauchli66}
is a Ramsey theorem for products of finitely many trees
 of height $\omega$ which are finitely branching and have no terminal nodes.
It was discovered as a central lemma to the proof in \cite{Halpern/Levy71} that the Boolean Prime Ideal Theorem (the fact that every ideal in a  Boolean algebra can be extended to a prime ideal) is strictly weaker than the Axiom of Choice, over Zermelo-Fraenkel  set theory.
Many variations have been studied, some of which are equivalent to the statement that the BPI is strictly weaker than the Axiom of Choice.
Recent compendia of various
versions of the Halpern-\Lauchli\ Theorem
 appear in \cite{TodorcevicBK10} and
\cite{Dodos/KanBK}.
The Halpern-\Lauchli\ Theorem has found numerous applications in proofs of  partition relations for countable structures, directly to  products of rationals (see \cite{Laver84}) and via the closely related  theorem  of Milliken for strong trees \cite{Milliken79} to
 finite sets of rationals (see \cite{DevlinThesis})
 and the Rado graph (see \cite{Sauer06} and \cite{Laflamme/Sauer/Vuksanovic06}).

Some years after the Halpern-\Lauchli\ Theorem was discovered,
Harrington
found a proof
which uses the techniques and language of forcing, though without actually   passing to a generic extension of the ground model.
Though this proof was well-known in certain circles, a published version did not appear until
\cite{Farah/TodorcevicBK}.

Shelah applied this proof
method of Harrington in
\cite{Shelah91} to  prove a partition theorem (analogue of Milliken's Theorem) for trees on $2^{<\kappa}$, where $\kappa$ is a cardinal whose measurability is preserved by adding many Cohen subsets of $\kappa$.
This  result was extended and applied by D\v{z}amonja, Larson and Mitchell in \cite{Dzamonja/Larson/MitchellQ09} and \cite{Dzamonja/Larson/MitchellRado09}
to obtain partition relations on the $\kappa$-rationals and $\kappa$-Rado graph.
This work, as well as the exposition in Chapter 3 of Todorcevic's book \cite{TodorcevicBK10}
informed the authors' previous work on variations of the Halpern-\Lauchli\ Theorem for more than one tree at uncountable cardinals.
 In \cite{Dobrinen/Hathaway17},
we mapped out the implications between weaker and stronger forms of the Halpern-\Lauchli\ Theorem on trees of uncountable height, and found a better upper bound for the consistency strength of the theorem holding for finitely many trees at a measurable cardinal.
Building on  work in  \cite{Dzamonja/Larson/MitchellQ09} and \cite{Dobrinen/Hathaway17},
Zhang proved a stronger tail-cone version at measurable cardinals and applied it to obtain the analogue of Laver's partition relation for products of finitely many trees on a measurable cardinal (see \cite{Zhang17}).

It is intriguing   that
all  theorems for trees of uncountable height proved so far have  required assumptions beyond ZFC.
In fact, it is still unknown  whether such partition relation theorems for trees at uncountable heights simply are true in ZFC or whether they entail some large cardinal strength.
For more discussion of this main problem and other related open problems, see Section \ref{conclusion}.

In this paper, we are
interested in which forcings preserve the Halpern-L\"{a}uchli Theorem for trees of uncountable height, once it is known to hold.
Section \ref {sec.2} contains basic definitions, most of which are found in \cite{Dobrinen/Hathaway17} and \cite{Zhang17}.
It also contains an equivalence between the tail-cone version of the Halpern-L\"{a}uchli Theorem and a modified version which is easier to satisfy in practice.
Section \ref{section.Derived} contains a new
 method  which constructs a tree in the ground model using forcing names; this is called the {\em derived tree} from a name for a tree.
The Derived Tree Theorem is proved there.

The Derived Tree Theorem
 is applied in Section \ref{sec.4}  to show that small forcings preserve the Halpern-L\"{a}uchli Theorem and its tail-cone version.
As the partition relation on finite products of $\kappa$-rationals holds in any model where  the tail-cone version holds (by work of Zhang in \cite{Zhang17}),
our work shows that this partition
 relation is preserved by any further small forcing.

Section \ref{sec.reflection} presents some instances when the  somewhere dense version (SDHL) has reflection properties.  Thus, if SDHL holds for a stationary set of cardinals below a strongly inaccessible cardinal $\kappa$, then it holds at $\kappa$.
Second, we prove that for a measurable cardinal $\kappa$, SDHL holds at $\kappa$ if and only if the set of cardinals below $\kappa$ where SDHL holds is a member of each normal ultrafilter on $\kappa$.
We apply this to show that ${<}\kappa$-closed forcings preserve SDHL at measurable cardinals.

Finally, in Section \ref{sec.SDHLnotwc}, we provide a model of ZFC where SDHL holds at some regular cardinal which is not weakly compact.  This produces a different  model than the one in \cite{Zhang17}, one that is obtained by a large collection of forcings.
Section \ref{conclusion} contains several key questions
brought to the fore by this work.
Although interesting in their own right, they are all
 sub-problems of the main open problem:
Is   the Halpern-\Lauchli\ Theorem  for trees  of any cardinal height simply true in ZFC?


\section{Basic Definitions}\label{sec.2}

We review here some  fundamental
 definitions from  \cite{Dobrinen/Hathaway17}.
Given sequences $s$ and $t$,
 the notation
 $s \sqsubseteq t$ means that $s$
 is an initial segment of $t$;
the notation $s\sqsubset t$  denotes that $s$ is a proper initial segment of $t$.
A set $T \subseteq {^{<\kappa} \kappa}$ is a \textit{tree}
 iff it is closed under taking initial segments.
For $t\in {^{<\kappa} \kappa}$, let $\Dom(t)$ denote the domain of $t$.
We shall also call this the {\em length} of $t$.
Given  $\alpha \le \Dom(t)$,
 we write $t \restriction \alpha$ for the
 unique initial segment of $t$ of length $\alpha$.

\begin{definition}\label{defn.regular}
A tree $T $
 is  a \textit{regular $\kappa$-tree} if
$T \subseteq {^{<\kappa}\kappa}$ and
\begin{itemize}
\item[(1)]
 $T$ is a $\kappa$-tree; that is,
$T$ has height $\kappa$
 and each level of $T$ has size $< \kappa$;
\item[(2)]
Every maximal branch of $T$
 has length $\kappa$;
\item[(3)]
 $T$ is perfect, meaning that
 for any $t \in T$, there are incomparable
 $s,u \sqsupseteq t$ in $T$.
\end{itemize}
\end{definition}

Note that if $\kappa$ is a regular cardinal
 and there exists a regular $\kappa$-tree,
 then $\kappa$ must be a strongly inaccessible
 cardinal.
However, there do exist regular $\kappa$-trees
 for singular cardinals $\kappa$.
Specifically,
 there exists a regular $\kappa$-tree if
 $2^\lambda < \kappa$ for all
 $\lambda < \cf(\kappa)$.

Given a set $X \subseteq {^{<\kappa}\kappa}$
 and an ordinal $\alpha < \kappa$, let
 $X(\alpha)$ denote the set of sequences in $X$ of length $\alpha$; that is,
\begin{equation}
X(\alpha)=\{ t \in X : \Dom(t) = \alpha \}.
\end{equation}
Given sets $X, Y \subseteq {^{<\kappa}\kappa}$,
 we say that $X$ \textit{dominates} $Y$ if
to each $ y \in Y$ there corresponds at least one  $x \in X$ such that
$ x \sqsupseteq y$.
Given $t \in {^{<\kappa}\kappa}$,
 $\mbox{Cone}(t)$ is the set of all
 $t' \sqsupseteq t$ in ${^{<\kappa}\kappa}$.

\begin{definition}
Given $1 \le d < \omega$
 and a sequence $\langle X_i \subseteq
 {^{<\kappa}\kappa} : i < d \rangle$,
 define the \textit{level product}
 of the $X_i$'s to be
 $$\bigotimes_{i < d} X_i :=
 \{ \langle x_i : i < d \rangle :
 (\exists \alpha < \kappa)
 (\forall i < d)\,
 x_i \in X_i(\alpha) \}.$$
\end{definition}

\begin{definition}
Let $1 \le d < \omega$.
Given $\kappa$-trees $T_0, ..., T_{d-1}$,
 we call a sequence
 $\langle X_i : i < d \rangle$
 a \textit{somewhere dense level matrix} if
 there are ordinals $\alpha < \beta < \kappa$
 and a sequence $\langle t_i \in T_i(\alpha) : i < d \rangle$
 such that each $X_i$ is a subset of $ T_i(\beta)$, and further, $X_i$ dominates
 $T_i(\alpha + 1) \cap \mbox{Cone}(t_i)$.
\end{definition}

The following is the
 \textit{somewhere dense} version of the
 Halpern-L\"auchli Theorem, which we  denote by
 $\textrm{SDHL}(d,\sigma,\kappa)$.
Given a coloring $c$ and a set $S$,
 we say $c$ is monochromatic on $S$
 if and only if $|c``S| = 1$.

\begin{definition}
For $1 \le d < \omega$ and
 cardinals $0 < \sigma < \kappa$
 with $\kappa$ infinite,
 $\mbox{SDHL}(d,\sigma,\kappa)$
 is the statement that given
 any sequence
 $\langle T_i : i < d \rangle$
 of regular  $\kappa$-trees and any coloring
 $$c : \bigotimes_{i < d} T_i \to \sigma,$$
 there exists a somewhere dense level matrix
 $\langle X_i \subseteq T_i : i < d \rangle$,
 such that  $c$
 is monochromatic on $\bigotimes_{i < d} X_i$.
\end{definition}

When we say $\mbox{SDHL}(d,\sigma,\kappa)$ is true,
 this implies that $\mbox{SDHL}(d,\sigma,\kappa)$ is defined,
 so $1 \le d < \omega$ and $\sigma < \kappa$.
However, $\mbox{SDHL}(d,\sigma,\kappa)$ does not imply
 $\kappa$ is an inaccessible cardinal.
If $1 \le d < \omega$ and $0 < \sigma < \kappa$ but
 there are no regular $\kappa$-trees,
 then $\mbox{SDHL}(d,\sigma,\kappa)$ is vacuously true.
This convention makes Proposition~\ref{prop5.1} more convenient.

Often one wants to apply $\mbox{SDHL}$,
 but restricted to certain levels.
This is readily seen to be possible
 for regular cardinals.

\begin{fact}
\label{sdhl_they_can_be_strong}
Let $1 \le d < \omega$,
 $\kappa$ a regular cardinal,
and  $0 < \sigma < \kappa$ be given, and
assume $\mbox{SDHL}(d,\sigma,\kappa)$ holds.
Let $T_i$  ($i < d$) be a sequence
 of regular $\kappa$-trees
 (so $\kappa$ is strongly inaccessible).
Let $c : \bigotimes_{i < d} T_i \to \sigma$ be a coloring, and
let $A \subseteq \kappa$ be cofinal in $\kappa$.
Then
 there exist $\alpha < \beta < \kappa$ both in $A$,
 $\langle t_i \in T_i(\alpha) : i < d \rangle$
 and $\langle X_i \subseteq T_i(\beta) : i < d \rangle$
 such that each $X_i$ dominates
 $T_i(\alpha+1) \cap \mbox{Cone}(t_i)$
 and $c$ is monochromatic on
 $\bigotimes_{i < d} X_i$.
\end{fact}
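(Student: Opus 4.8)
The plan is to reduce the statement to a single application of plain $\mbox{SDHL}(d,\sigma,\kappa)$ by \emph{condensing} each tree so that its levels record only the levels listed in $A$. Since $\kappa$ is regular and $A$ is cofinal, $A$ has order type $\kappa$; fix the increasing enumeration $A=\{a_\xi:\xi<\kappa\}$ with $a_0=\min A$, and for each $i<d$ fix one node $r_i\in T_i(a_0)$. I will build, for each $i<d$, a regular $\kappa$-tree $T_i^*\subseteq{}^{<\kappa}\kappa$ together with a level- and order-preserving embedding $\psi_i:T_i^*\to T_i$ whose root maps to $r_i$ and which carries the $\xi$-th level of $T_i^*$ into $T_i(a_\xi)\cap\mbox{Cone}(r_i)$. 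The one property I will insist on is that $\psi_i$ is \emph{surjective on immediate successors}: for every $s\in T_i^*$ of length $\xi$, the map $\psi_i$ sends the immediate successors of $s$ bijectively onto $T_i(a_{\xi+1})\cap\mbox{Cone}(\psi_i(s))$. Restricting to the cone above $r_i$ is harmless, since the Fact only asks for \emph{some} witnessing $t_i$, and it conveniently makes the single root of $T_i^*$ correspond to a genuine node at an $A$-level.

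The construction proceeds by recursion on the length $\xi$. At successor steps I keep every immediate successor, labelling the successors of $s$ by an index set of size $|T_i(a_{\xi+1})\cap\mbox{Cone}(\psi_i(s))|<\kappa$ and defining $\psi_i$ on them accordingly, thereby maintaining successor-surjectivity. The delicate point, where I expect the main obstacle to lie, is at a limit length $\lambda$: a branch of $T_i^*$ below $\lambda$ has $\psi_i$-image an increasing chain $\langle t_\eta:\eta<\lambda\rangle$ with $t_\eta\in T_i(a_\eta)$, whose union $t_b$ has length $\delta:=\sup_{\eta<\lambda}a_\eta$, and $\delta$ may be strictly below $a_\lambda$, so that $\psi_i$ cannot be an isomorphism (a limit node of a tree in ${}^{<\kappa}\kappa$ is the union of its predecessors, hence unique over its branch, whereas several nodes of $T_i(a_\lambda)$ may lie over $t_b$). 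I first note that $t_b\in T_i$: every proper initial segment of $t_b$ is an initial segment of some $t_\eta\in T_i$, so if $t_b\notin T_i$ then the chain below it would be a maximal branch of length $\delta<\kappa$, contradicting clause (2) of the definition of a regular $\kappa$-tree. Since $T_i$ is perfect with no terminal nodes, $T_i(a_\lambda)\cap\mbox{Cone}(t_b)\neq\emptyset$, and I simply \emph{choose} one of its elements to be the image of the unique length-$\lambda$ node of $T_i^*$ extending that branch. This single-valued choice keeps $T_i^*$ a genuine subtree of ${}^{<\kappa}\kappa$ while preserving the embedding, and one checks routinely (using that each level injects into the corresponding level of $T_i$, and that perfectness and height $\kappa$ are inherited at successor steps) that $T_i^*$ is again a regular $\kappa$-tree.

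Finally I pull back the coloring by $c^*(\langle x_i^*:i<d\rangle):=c(\langle\psi_i(x_i^*):i<d\rangle)$, which is well defined on $\bigotimes_{i<d}T_i^*$ because $\psi_i$ preserves levels uniformly in $i$. Applying $\mbox{SDHL}(d,\sigma,\kappa)$ to $\langle T_i^*:i<d\rangle$ and $c^*$ yields a somewhere dense level matrix $\langle X_i^*\subseteq T_i^*(\eta):i<d\rangle$ with witnesses $t_i^*\in T_i^*(\xi)$ for some $\xi<\eta$, such that each $X_i^*$ dominates $T_i^*(\xi+1)\cap\mbox{Cone}(t_i^*)$ and $c^*$ is monochromatic on $\bigotimes_{i<d}X_i^*$. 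I then set $\alpha:=a_\xi$ and $\beta:=a_\eta$ (both in $A$), $t_i:=\psi_i(t_i^*)\in T_i(\alpha)$, and $X_i:=\psi_i[X_i^*]\subseteq T_i(\beta)$; monochromaticity of $c$ on $\bigotimes_{i<d}X_i$ is immediate. Because $\xi+1$ is a successor level, successor-surjectivity gives that $X_i$ dominates all of $T_i(a_{\xi+1})\cap\mbox{Cone}(t_i)$. The last step closes the gap between the level $a_{\xi+1}$ and the level $\alpha+1$ demanded by the Fact: any immediate successor $v$ of $t_i$ at level $\alpha+1$ has, since $T_i$ has no terminal nodes, some extension $u\in T_i(a_{\xi+1})\cap\mbox{Cone}(t_i)$, and the $x\in X_i$ with $x\sqsupseteq u$ then satisfies $x\sqsupseteq v$; hence $X_i$ dominates $T_i(\alpha+1)\cap\mbox{Cone}(t_i)$, as required. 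The crux of the argument is thus the limit-level selection together with the observation that domination at the next $A$-level upgrades, via upward extension, to domination of the immediate successors.
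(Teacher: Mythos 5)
Your overall reduction scheme (pull the coloring back along a level-preserving embedding, apply $\mbox{SDHL}$ to the condensed trees, then transfer domination from level $a_{\xi+1}$ down to level $\alpha+1$ by extending immediate successors upward) would be fine \emph{if} the condensed trees existed; the transfer arguments in your last paragraph are correct. The genuine gap is that the trees $T_i^*$ you require need not exist, and the ``one checks routinely\dots that $T_i^*$ is again a regular $\kappa$-tree'' step is false --- exactly at the limit stage you flagged. In any subtree of ${}^{<\kappa}\kappa$, a node at a limit level is the union of its predecessors, so level $\lambda$ of $T_i^*$ can contain at most one node per branch of $T_i^*\restriction\lambda$, and via $\psi_i$ these branches correspond to nodes of $T_i$ of length $\delta_\lambda:=\sup_{\eta<\lambda}a_\eta$, which may be strictly below $a_\lambda$. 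All branching of $T_i$ occurring at levels in $[\delta_\lambda,a_\lambda)$ is therefore invisible to $T_i^*$: you keep a single representative in $T_i(a_\lambda)$ above each $t_b$ and discard its siblings, and successor-surjectivity from then on reproduces only the part of $T_i$ sitting above the chosen representatives. Nothing forces the result to be perfect, so clause (3) of Definition~\ref{defn.regular} can fail.

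This is not a repairable bookkeeping issue; the construction is impossible for some inputs. Let $\kappa$ be inaccessible, let $A$ be the set of all successor ordinals below $\kappa$ (so $a_{\xi+1}=a_\xi+1$, every $a_\xi$ is a successor ordinal, and $\delta_\lambda$ is always a limit ordinal not in $A$), and let
\[
T=\bigl\{f\in{}^{<\kappa}2:\ \forall\gamma\in\Dom(f)\ \bigl(f(\gamma)=1\rightarrow\gamma\ \text{is a limit ordinal}\bigr)\bigr\}.
\]
Then $T$ is a regular $\kappa$-tree (it branches exactly at nodes of limit length, and such nodes occur cofinally above every node), but every $w\in T(a_\xi)$ has a \emph{unique} extension in $T(a_{\xi+1})$, since no limit coordinate lies in $[a_\xi,a_{\xi+1})$. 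Hence successor-surjectivity forces every node of your $T^*$ to have exactly one immediate successor, while each limit level contributes exactly one continuation per branch; so $T^*$ is a single chain of length $\kappa$, no matter how $r_i$ and the limit-level representatives are chosen, and $\mbox{SDHL}$ cannot be applied to it. In other words, the three properties you insist on --- $T_i^*$ regular, level $\xi$ of $T_i^*$ mapped into $T_i(a_\xi)$, and surjectivity onto immediate successors --- are jointly unsatisfiable for this $T$ and $A$. The root cause is that a condensation of this kind exists exactly when $\xi\mapsto a_\xi$ is continuous, i.e.\ when $A$ contains a club, whereas the Fact demands $\alpha,\beta$ in the given cofinal $A$, which (as here) may consist entirely of successor ordinals. (This example does not contradict the Fact itself: for this $T$ and $A$ the conclusion holds trivially, since $T(\alpha+1)\cap\mbox{Cone}(t)$ is a singleton for $\alpha\in A$ and a one-point $X$ suffices. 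It defeats only the reduction.) For what it is worth, the paper gives no proof of this Fact --- it is asserted as ``readily seen'' --- but any correct argument must land the \emph{lower} witness level inside $A$ by some mechanism other than re-indexing the trees' levels along $A$, since the branching of the $T_i$'s can be concentrated entirely outside the levels of $A$.
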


The other two forms of the Halpern-L\"auchli Theorem
 we will consider involve the notion of a strong subtree.
In this paper, by successor we mean \textit{immediate} successor.

\begin{definition}
Let $T$ be a
 regular $\kappa$-tree.
A tree $T' \subseteq T$ is a
 \textit{strong subtree} of $T$
 as witnessed by some set $A \subseteq \kappa$
 cofinal in $\kappa$ if $T'$ is regular and
 for each $t \in T'(\alpha)$ for $\alpha < \kappa$,
\begin{itemize}
\item[1)] $\alpha \in A$ implies every successor
 of $t$ in $T$ is also in $T'$;
\item[2)] $\alpha \not\in A$ implies that $t$
 has a unique successor in $T'$ on level
 $\alpha + 1$.
\end{itemize}
We  refer to an ordinal  $\alpha \in A$
 as a \textit{splitting level} of $T'$.
\end{definition}

The following is the strong tree version of the
 Halpern-L\"auchli Theorem, which we  denote by
 $\textrm{HL}(d,\sigma,\kappa)$.

\begin{definition}
For $1 \le d < \omega$
 and cardinals $0 < \sigma < \kappa$ with $\kappa$ infinite,
 $\textrm{HL}(d,\sigma,\kappa)$
 is the following statement:
Given any sequence
 $\langle T_i  :
 i < d \rangle$
 of regular $\kappa$-trees and a coloring
 $c:\bigotimes_{i<d} T_i \rightarrow \sigma$,
 there exists a sequence of trees
 $\langle T'_i : i < d \rangle$
 such that the following hold:
\begin{enumerate}
\item
 Each $T'_i$ is a strong subtree of $T_i$
 as witnessed by the same set $A \subseteq \kappa$,
 independent of $i$; and
\item
$c$ is monochromatic on
$\bigcup_{\alpha \in A} \bigotimes_{i<d} T'_i(\alpha)$.
\end{enumerate}
\end{definition}

Just as in Fact~\ref{sdhl_they_can_be_strong},
 HL can be applied but restricted to any $A \subseteq \kappa$
 cofinal in $\kappa$.
Specifically, if $\kappa$ is a regular cardinal,
 and we have a sequence of regular $\kappa$-trees $T_i$ for $i < d$,
 then given any $A \subseteq \kappa$ cofinal in $\kappa$,
 there is a sequence of strong subtrees
 $T'_i \subseteq T_i$ for $i < d$ all witnessed by
 the same set of splitting levels $B \subseteq A$ and
 $c$ is monochromatic on
 $\bigcup_{\beta \in B} \bigotimes_{i < d} T'_i(\beta)$.
In practice, one usually uses  regular $\kappa$-trees $T_i$
such that for each $\alpha\in A$, each node  of length $\alpha$ in $T_i$ has
 two or more successors.

Although in
 \cite{Dobrinen/Hathaway17} we stated that
 $\textrm{HL}(d,\sigma,\kappa)$
 is equivalent to $\textrm{SDHL}(d,\sigma,\kappa)$ for any weakly compact $\kappa$,
 our proof never actually used the assumption
 that $\kappa$ was weakly compact, only that is
 was strongly inaccessible.

\begin{proposition}\label{prop.HL=SDHL}
$\textrm{HL}(d,\sigma,\kappa)$
 is equivalent to $\textrm{SDHL}(d,\sigma,\kappa)$ for all inaccessible $\kappa$.
\end{proposition}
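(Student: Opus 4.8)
The two implications require quite different amounts of work. The direction $\textrm{HL}(d,\sigma,\kappa) \Rightarrow \mbox{SDHL}(d,\sigma,\kappa)$ is a direct extraction. Given regular $\kappa$-trees $\langle T_i : i < d\rangle$ and a coloring $c$, apply $\textrm{HL}$ to obtain strong subtrees $\langle T'_i : i<d\rangle$ witnessed by a common cofinal set $A$ with $c$ constant, say with value $c^*$, on $\bigcup_{\alpha\in A}\bigotimes_{i<d} T'_i(\alpha)$. Since $A$ is cofinal and $\kappa$ is regular, pick any $\alpha<\beta$ both in $A$ and any $t_i\in T'_i(\alpha)$ for $i<d$, and set $X_i := T'_i(\beta)\cap\mbox{Cone}(t_i)$. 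Because $\alpha$ is a splitting level of $T'_i$, every immediate successor of $t_i$ in $T_i$ lies in $T'_i$; and because $T'_i$ is a regular $\kappa$-tree (so no branch is maximal below height $\kappa$), each such successor extends to a node of $T'_i$ at level $\beta$, i.e.\ into $X_i$. Thus $X_i\subseteq T_i(\beta)$ dominates $T_i(\alpha+1)\cap\mbox{Cone}(t_i)$, so $\langle X_i : i<d\rangle$ is a somewhere dense level matrix, and $\bigotimes_{i<d}X_i\subseteq\bigotimes_{i<d}T'_i(\beta)$ is monochromatic of color $c^*$. This gives $\mbox{SDHL}(d,\sigma,\kappa)$.

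For the converse $\mbox{SDHL}(d,\sigma,\kappa)\Rightarrow\textrm{HL}(d,\sigma,\kappa)$, the plan is a transfinite fusion of length $\kappa$. Fix regular $\kappa$-trees $\langle T_i\rangle$ and a coloring $c$. By recursion on $\xi<\kappa$ I would build an increasing continuous sequence of splitting levels $\langle\alpha_\xi : \xi<\kappa\rangle$ together with the portion of each $T'_i$ below level $\alpha_\xi$, maintaining that the current top frontier $T'_i(\alpha_\xi)$ is an antichain of size $<\kappa$ and that $c$ is constant on $\bigotimes_{i<d}T'_i(\alpha_\xi)$, recording this color as $c_\xi<\sigma$. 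At a successor stage, having split every frontier node into all of its $T_i$-successors, I would invoke $\mbox{SDHL}$ in the strong form of Fact~\ref{sdhl_they_can_be_strong} to locate a common higher level $\alpha_{\xi+1}$ and, for each frontier successor, a single extension to that level so that the resulting new level product is monochromatic; the frontier stays of size $<\kappa$ because $\kappa$ is a strong limit. At a limit stage $\lambda$ I would set $\alpha_\lambda=\sup_{\xi<\lambda}\alpha_\xi<\kappa$ (using regularity) and take the nodes of $T'_i$ at level $\alpha_\lambda$ to be the limits of the branches built so far; these limits lie in $T_i$ precisely because a regular $\kappa$-tree contains the supremum of every branch of length $<\kappa$ (otherwise that branch would be maximal of length $<\kappa$, violating the definition). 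Finally, since $\kappa$ is regular and $\sigma<\kappa$, the map $\xi\mapsto c_\xi$ is constant with some value $c^*$ on a set $\Xi\subseteq\kappa$ of size $\kappa$, hence cofinal; de-splitting the $T'_i$ at the levels in $A\setminus A'$, where $A=\{\alpha_\xi : \xi<\kappa\}$ and $A'=\{\alpha_\xi : \xi\in\Xi\}$ (that is, retaining a single successor at each discarded splitting level), yields strong subtrees with common splitting set $A'$ on which $c$ takes the constant value $c^*$, which is exactly $\textrm{HL}(d,\sigma,\kappa)$.

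The main obstacle is the successor step, namely producing a single monochromatic value for the entire new level product rather than merely ``somewhere'': $\mbox{SDHL}$ only guarantees monochromaticity among the extensions of one node $t_i$ per tree, whereas a splitting level of a strong subtree must split every one of its (possibly many) frontier nodes, and a priori different cones above different frontier nodes could demand different colors. I would bridge this gap by encoding the product of the frontiers into auxiliary regular $\kappa$-trees---amalgamating, above a common stem in each coordinate, rebased copies of the cones over the individual frontier nodes---so that a single application of $\mbox{SDHL}$ to the amalgamated trees returns one color dominating all successors of all frontier nodes simultaneously; translating the resulting monochromatic matrix back through the copies furnishes the required uniform extension. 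Verifying that this amalgamation is again a sequence of regular $\kappa$-trees to which $\mbox{SDHL}$ applies, that the translation respects the coloring, and that regularity of each $T'_i$ is preserved through the limit stages, is where essentially all the technical effort lies.
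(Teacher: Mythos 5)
Your first direction, $\textrm{HL}(d,\sigma,\kappa)\Rightarrow\mbox{SDHL}(d,\sigma,\kappa)$, is correct and is the standard extraction. Before turning to the converse, note that the paper itself contains no proof of this proposition: it quotes the equivalence from \cite{Dobrinen/Hathaway17}, observing only that the argument given there for weakly compact $\kappa$ uses nothing beyond strong inaccessibility. So your converse has to stand on its own, and it does not: the gap sits exactly at the step you call the main obstacle, and your proposed bridge does not close it. Your successor step asks, given the current frontier, to split \emph{every} frontier node and extend all of the resulting successors to a single higher level so that the full new level product is $c$-monochromatic. This is not merely unproved from $\mbox{SDHL}$; it is false, already for $d=1$, from frontiers your own construction can reach. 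Let $T={}^{<\kappa}2$, fix $0<\gamma_0<\kappa$, and define $c(u)=u(\gamma_0)$ if $\Dom(u)>\gamma_0$ and $c(u)=0$ otherwise. At stage $0$, a perfectly legitimate output of $\mbox{SDHL}(1,2,\kappa)$ is the root together with a set $X\subseteq T(\gamma_0)$ dominating $T(1)$: every node of length at most $\gamma_0$ has color $0$, so this matrix is monochromatic and your invariant holds, with the frontier now sitting at level $\gamma_0$. But each frontier node $x$ has the two successors $x^\frown 0$ and $x^\frown 1$, and every extension of $x^\frown j$ to any level above $\gamma_0$ has color $j$. Hence any set of nodes at a common higher level dominating both successors of even a single frontier node realizes both colors, and your successor step has no solution at all. (Of course $\textrm{HL}(1,2,\kappa)$ holds for this $c$ --- any strong subtree whose stem passes level $\gamma_0$ works --- so the example defeats your strategy, not the proposition; and $\mbox{SDHL}$ itself survives at this stage by returning matrices above nodes of length greater than $\gamma_0$, which split only one node above one frontier point, exactly what your step cannot use.)

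The same example shows why the amalgamation cannot rescue the step. If you graft rebased copies of the cones over the frontier successors above a common stem to form auxiliary trees $U_i$, then $\mbox{SDHL}$ applied to the $U_i$ dominates the successors of a tuple $\langle u_i : i<d\rangle$ \emph{of its own choosing}. Your decoding gives what you want only when each $u_i$ is the stem top, so that its successors are the roots of all the copies; if $u_i$ lies properly inside one copy, the returned matrix dominates successors of a single node over a single frontier point, which is the original limitation of $\mbox{SDHL}$. Nothing in $\mbox{SDHL}$ lets you demand the stem-top outcome, and in the configuration above that outcome is impossible --- the statement it would yield is false there --- so the output necessarily lands inside one copy, and no re-encoding of the coloring changes this. (A secondary problem: you require the splitting levels to be continuous, so limit levels are splitting levels, yet nothing in the construction makes the products of the limit frontiers monochromatic; since $A$ need only be cofinal you could make limit levels pass-through levels, but the very next step is then again the false successor step.) The moral is that $\mbox{SDHL}$ cannot be iterated greedily with the invariant ``the current frontier product is monochromatic'': a correct proof must carry an invariant strong enough to keep the recursion out of such dead ends --- roughly, that the color being built toward remains attainable above \emph{every} tuple of nodes extending the current frontier, which constrains how the stem, the color, and each successive matrix are chosen --- and establishing such an invariant from $\mbox{SDHL}$ is the substance of the proof in \cite{Dobrinen/Hathaway17} to which the paper defers.
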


Finally, the following is the \textit{tail cone}
 version of the
 Halpern-L\"auchli Theorem,
 which we shall denote by
 $\mbox{HL}^{tc}(d,<\kappa,\kappa)$, which appears in Section 3 of \cite{Zhang17}.

\begin{definition}\label{defn.tailcone}
For $1 \le d < \omega$ and
 $\kappa$ an infinite cardinal,
  $\mbox{HL}^{tc}(d,<\kappa,\kappa)$
  is the following statement:
 Given a sequence of regular $\kappa$-trees
 $\langle T_i  : i < d \rangle$,
 a sequence of nonzero cardinals
 $\langle \sigma_{\zeta} < \kappa : \zeta < \kappa \rangle$,
 and a sequence of colorings
 $c_{\zeta} : \bigotimes_{i < d} T_i \to \sigma_{\zeta}$
 for ${\zeta} < \kappa$,
 there exists a sequence of trees
 $\langle T'_i : i < d \rangle$ such that
\begin{enumerate}
\item[(1)]
Each $T'_i$ is a strong subtree of $T_i$
 as witnessed  by the same set $A \subseteq \kappa$,
 independent of $i$;
  and
\end{enumerate}
letting
$\{\alpha_{\zeta} : {\zeta} < \kappa \}$ denote the increasing enumeration
 of $A$,
\begin{enumerate}
\item[(2)]
For each
 pair of ordinals ${\zeta} \le \xi < \kappa$,
 given any sequence
 $\langle t_i \in T'_i(\alpha_{\xi}) : i < d \rangle$,
 we have
 $$c_{\zeta} \langle t_i : i < d \rangle  =
  c_{\zeta} \langle t_i \restriction \alpha_{\zeta} : i < d \rangle .$$
\end{enumerate}
In other words, the $c_{\zeta}$-color of a tuple
 $\vec{t} = \langle t_i : i < d \rangle$ on the
 $\xi$-th splitting level
 (for $\xi\ge {\zeta}$) is the same as
 the $c_{\zeta}$-color of $\vec{t}$ restricted
 to the $\zeta$-th splitting level.
\end{definition}

Note that
 $\mbox{SDHL}(d,\sigma,\kappa)$,
 $\mbox{HL}(d,\sigma,\kappa)$, and
 $\mbox{HL}^{tc}(d,\sigma,\kappa)$
 are all statements about $V_{\kappa+1}$.
We will need the following
 concept later.

\begin{definition}\label{tcmod}
For $1 \le d < \omega$ and $\kappa$
 an infinite cardinal,
 the \textit{modified}
 $\mbox{HL}^{tc}(d, {<\kappa}, \kappa)$
 is just $\mbox{HL}^{tc}(d, {<\kappa}, \kappa)$
 but with (2) replaced with the following:
\begin{itemize}
\item[(2*)] There is a function
 $\mu : \kappa \to \kappa$ such  that
 $(\forall \gamma < \kappa)\, \mu(\gamma) \ge \gamma$
satisfying  the following:
 For each pair of ordinals
 $ \zeta \le \gamma < \kappa$,
 given any sequence
 $\langle t_i \in T_i'(\alpha_\gamma) : i < d \rangle$,
 we have
 $$c_{\mu(\zeta)} \langle t_i : i < d \rangle =
 c_{\mu(\zeta)} \langle t_i \restriction
 \alpha_{\zeta} : i < d \rangle.$$
In other words, the $c_{\mu(\zeta)}$-color of a tuple
 is determined by restricting to the
 $\zeta$-th splitting level.
\end{itemize}
\end{definition}

\begin{proposition}\label{prop.2.10}
\label{kappamanyisokay}
Fix $1 \le d < \omega$ and $\kappa$
 a strongly inaccessible cardinal.
Then $\mbox{HL}^{tc}(d,{<\kappa},\kappa)$
 and its modified version are equivalent.
\end{proposition}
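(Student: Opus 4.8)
The plan is to note that one direction is trivial and to obtain the other by replacing the given sequence of colorings with a single ``running product'' coloring, so that the freedom built into the function $\mu$ costs nothing. I would first dispatch the direction $\mbox{HL}^{tc}(d,{<}\kappa,\kappa)\Rightarrow$ modified. Given trees $\langle T_i\rangle$, cardinals $\langle\sigma_\zeta\rangle$ and colorings $\langle c_\zeta\rangle$, apply $\mbox{HL}^{tc}$ to get strong subtrees $\langle T_i':i<d\rangle$ witnessed by $A=\{\alpha_\zeta:\zeta<\kappa\}$ satisfying (2), and then take $\mu$ to be the identity: since $\mu(\gamma)=\gamma\ge\gamma$ and $c_{\mu(\zeta)}=c_\zeta$, clause (2*) literally reduces to (2). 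The same trees work, so this direction needs nothing more.

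For the converse the key idea is to record all earlier colorings at once. Given the data $\langle T_i\rangle,\langle\sigma_\zeta\rangle,\langle c_\zeta\rangle$, for each $\eta<\kappa$ I would define
\[
d_\eta:\bigotimes_{i<d}T_i\to\prod_{\zeta\le\eta}\sigma_\zeta,\qquad d_\eta(\vec t)=\langle c_\zeta(\vec t):\zeta\le\eta\rangle .
\]
The one place strong inaccessibility is used is in checking that $\tau_\eta:=\bigl|\prod_{\zeta\le\eta}\sigma_\zeta\bigr|$ is a nonzero cardinal below $\kappa$: the number of factors is $<\kappa$, so by regularity $\sigma:=\sup_{\zeta\le\eta}\sigma_\zeta<\kappa$, and since $\kappa$ is a strong limit the bound $\sigma^{|\eta|}\le 2^{\max(\sigma,|\eta|)}<\kappa$ shows $\tau_\eta<\kappa$. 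Hence $\langle d_\eta:\eta<\kappa\rangle$ is legitimate input for the modified version.

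Now apply the modified $\mbox{HL}^{tc}$ to $\langle d_\eta\rangle$. This yields strong subtrees $\langle T_i':i<d\rangle$ witnessed by a single cofinal $A=\{\alpha_\zeta:\zeta<\kappa\}$, together with $\mu:\kappa\to\kappa$ with $\mu(\gamma)\ge\gamma$, such that for all $\zeta'\le\gamma<\kappa$ and all $\langle t_i\in T_i'(\alpha_\gamma):i<d\rangle$,
\[
d_{\mu(\zeta')}\langle t_i:i<d\rangle=d_{\mu(\zeta')}\langle t_i\restriction\alpha_{\zeta'}:i<d\rangle .
\]
I claim these same trees witness the unmodified $\mbox{HL}^{tc}$ for $\langle c_\zeta\rangle$. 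Clause (1) is inherited verbatim. For (2), fix $\zeta\le\xi<\kappa$ and $\langle t_i\in T_i'(\alpha_\xi):i<d\rangle$ and specialize the displayed identity to $\zeta'=\zeta$, $\gamma=\xi$ (permissible since $\zeta\le\xi$), obtaining $d_{\mu(\zeta)}\langle t_i\rangle=d_{\mu(\zeta)}\langle t_i\restriction\alpha_\zeta\rangle$. Because $\mu(\zeta)\ge\zeta$, the map $c_\zeta$ is the $\zeta$-th coordinate of $d_{\mu(\zeta)}$, so reading off that coordinate on both sides gives $c_\zeta\langle t_i\rangle=c_\zeta\langle t_i\restriction\alpha_\zeta\rangle$, which is exactly (2).

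The step I expect to require the most care is pairing the construction of $d_\eta$ with the diagonal specialization $\zeta'=\zeta$: the inequality $\mu(\gamma)\ge\gamma$ furnished by the modified version is precisely what guarantees that $c_\zeta$ already sits inside $d_{\mu(\zeta)}$, so that controlling $d_{\mu(\zeta)}$ from splitting level $\alpha_\zeta$ upward automatically controls $c_\zeta$ from $\alpha_\zeta$ upward. Everything else is bookkeeping, once the cardinal arithmetic keeping each target $\tau_\eta$ below $\kappa$ is verified.
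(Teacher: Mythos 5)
Your proposal is correct and follows essentially the same route as the paper: the easy direction is handled by taking $\mu$ to be the identity, and the converse is proved by encoding all colorings $c_\zeta$ for $\zeta\le\eta$ into a single product coloring (the paper's $c'_\gamma$, your $d_\eta$), using strong inaccessibility to keep the number of colors below $\kappa$, and then using $\mu(\zeta)\ge\zeta$ to read off $c_\zeta$ as a coordinate of $d_{\mu(\zeta)}$. Your write-up merely makes explicit the ``encoding'' that the paper states abstractly; the arguments are otherwise identical.
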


\begin{proof}
It is clear that the unmodified version
 implies the modified version holds:
Just set
 $\mu : \kappa \to \kappa$ to be
 the identity function.
For the other direction,
 assume the modified version holds.

Let $\langle T_i : i < d \rangle$
 be a sequence of regular $\kappa$-trees and
 $\langle c_{\zeta} : \zeta < \kappa \rangle$
 a sequence of colorings, where
 $c_{\zeta} : \bigotimes_{i<d} T_i \to \sigma_{\zeta}$
 for each $\zeta < \kappa$.
We will find strong subtrees
 $T_i' \subseteq T_i$ for $i < d$ all witnessed by
 the same set of splitting levels
 $A = \{ \alpha_{\gamma} : \gamma < \kappa \}$
 such that (2) of the definition of
 $\mbox{HL}^{tc}$ holds.

For each $\gamma < \kappa$,
 let $\sigma'_{\gamma}$ be the product of the cardinals
 $\sigma_{\zeta}$ for $\zeta \le \gamma$.
Since $\kappa$ is strongly inaccessible,
 each $\sigma'_{\gamma}$ is strictly less than $\kappa$.
For each $\gamma < \kappa$,
 let $c'_{\gamma} : \bigotimes_{i < d} T_i \to \sigma'_{\gamma}$
 be a coloring which encodes the colorings
 $c_{\zeta}$ for $\zeta \le \gamma$.
That is, given $\gamma < \kappa$ and
 $\vec{t} \in \bigotimes_{i < d} T_i$,
 the sequence
 $\langle c_{\zeta}(\vec{t}\,) : \zeta \le \gamma \rangle$
 can be recovered from knowing $c_{\gamma}'(\vec{t}\,)$.
Thus,
given any
 $\vec{s}, \vec{t}$
 with the same $c_{\gamma}'$-color,
 then $\vec{s}, \vec{t}$ have the same
 $c_{\zeta}$-color for all $\zeta \le \gamma$.

Now apply the modified version
 to the trees $\langle T_i : i < d \rangle$
 and the colorings
 $\langle c'_{\gamma} : \gamma < \kappa \rangle$
 to produce strong subtrees
 $T_i' \subseteq T_i$,
 each with the same set of splitting levels
 $A = \{ \alpha_{\gamma} : \gamma < \kappa \}$,
 and some  fixed function
 $\mu: \kappa \to \kappa$ such that
 for any pair of ordinals $\zeta \le \gamma < \kappa$,
 the $c'_{\mu(\zeta)}$-color of a $d$-tuple
 on the splitting level $\alpha_{\gamma}$
 is determined by restricting to
 the splitting level $\alpha_{\zeta}$.
Then
 for any pair of ordinals $\zeta \le \gamma < \kappa$
 and any $\langle t_i \in T_i'(\alpha_\gamma) :
 i < d \rangle$,
\begin{equation}
 c'_{\mu(\zeta)} \langle t_i : i < d \rangle =
 c'_{\mu(\zeta)} \langle t_i \restriction \alpha_{\zeta} :
 i < d \rangle.
\end{equation}
Recalling that  $c'_{\mu(\zeta)}$
 encodes the colorings $c_{\psi}$ for all $\psi \le \mu(\zeta)$
and that the function $\mu$  satisfies $\mu(\gamma)\ge\gamma$ for each $\gamma<\kappa$,
it follows that
 for each pair of ordinals
 $\zeta \le \gamma < \kappa$
and
 any $\langle t_i \in T_i'(\alpha_{\gamma}) :
 i < d \rangle$,
 \begin{equation}
c_{\zeta} \langle t_i : i < d \rangle =
 c_{\zeta} \langle t_i \restriction \alpha_{\zeta} :
 i < d \rangle.
\end{equation}
This is precisely (2) of the definition
 of $\mbox{HL}^{tc}$.
\end{proof}

\begin{observation}
\label{obs1}
$\mbox{HL}^{tc}(d,{<\kappa},\kappa)$
 implies $(\forall \sigma < \kappa)\,
 \mbox{HL}(d,\sigma,\kappa)$.
This can be seen by using a sequence of colorings
 $\langle c_{\zeta} : \zeta < \kappa \rangle$
 that are all constant except the first one
 $c_0$.
Applying $\mbox{HL}^{tc}(d,{<\kappa},\kappa)$ produces
  strong subtrees $T_i' \subseteq T_i$
 such that the $c_0$-color of a tuple on
 any splitting level is determined by restricting to
 the $0$-th splitting level.
\end{observation}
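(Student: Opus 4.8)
The plan is to deduce $\mbox{HL}(d,\sigma,\kappa)$ from $\mbox{HL}^{tc}(d,{<}\kappa,\kappa)$ by feeding the tail-cone version a degenerate sequence of colorings in which only the first coordinate is nontrivial. Fix $\sigma < \kappa$, a sequence $\langle T_i : i < d \rangle$ of regular $\kappa$-trees, and a coloring $c : \bigotimes_{i<d} T_i \to \sigma$. I would set $\sigma_0 = \sigma$ and $c_0 = c$, and for every $\zeta$ with $0 < \zeta < \kappa$ take $\sigma_\zeta = 1$ with $c_\zeta$ the unique constant coloring; each $\sigma_\zeta$ is then a nonzero cardinal below $\kappa$, as the definition of $\mbox{HL}^{tc}$ requires. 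Applying $\mbox{HL}^{tc}(d,{<}\kappa,\kappa)$ to $\langle T_i : i < d \rangle$ and $\langle c_\zeta : \zeta < \kappa \rangle$ produces strong subtrees $\langle T'_i : i < d \rangle$ sharing a single set of splitting levels $A = \{\alpha_\gamma : \gamma < \kappa\}$ and satisfying clause (2) of that definition.

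The next step is to specialize clause (2) to $\zeta = 0$: for every $\xi < \kappa$ and every tuple $\langle t_i \in T'_i(\alpha_\xi) : i < d \rangle$,
$$c_0\langle t_i : i < d \rangle = c_0\langle t_i \restriction \alpha_0 : i < d \rangle.$$
The crux is then to observe that the right-hand side is a single fixed color, independent of the chosen tuple. This holds because $\alpha_0 = \min A$ is the first splitting level, so below it no node of any $T'_i$ branches; since $T'_i \subseteq {^{<\kappa}\kappa}$ has the empty sequence as its unique node of length $0$, each $T'_i$ consists of a single branch up to level $\alpha_0$, whence $T'_i(\alpha_0)$ is a singleton. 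Consequently $\bigotimes_{i<d} T'_i(\alpha_0)$ is the single tuple $\langle t^*_i : i < d \rangle$, and every restriction $\langle t_i \restriction \alpha_0 : i < d \rangle$ coincides with it.

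Therefore $c_0 = c$ takes the constant value $c_0\langle t^*_i : i < d \rangle$ on every tuple lying on a splitting level, that is, on all of $\bigcup_{\alpha \in A} \bigotimes_{i<d} T'_i(\alpha)$, which is exactly the monochromaticity required by $\mbox{HL}(d,\sigma,\kappa)$; as $\sigma < \kappa$ was arbitrary, this yields $(\forall \sigma < \kappa)\,\mbox{HL}(d,\sigma,\kappa)$. The only step demanding care is the singleton observation: without it one obtains merely that the $c_0$-color on any splitting level is determined by restriction to level $\alpha_0$, and since a splitting level in general carries many nodes this alone would not produce a single color. It is precisely the absence of branching below the first splitting level that collapses that restriction to a point and converts the tail-cone ``determined by restriction'' property into genuine monochromaticity.
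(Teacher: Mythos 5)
Your proposal is correct and takes essentially the same route as the paper: the same degenerate sequence of colorings (only $c_0 = c$ nontrivial, all later $c_\zeta$ constant), the same application of $\mbox{HL}^{tc}(d,{<}\kappa,\kappa)$, and the same specialization of clause (2) to $\zeta = 0$. Your singleton observation --- that $T'_i(\alpha_0)$ is a single node because a strong subtree cannot branch at levels outside $A$, hence not below its first splitting level --- is precisely the step the paper's sketch leaves implicit, and it is what legitimately converts ``the $c_0$-color is determined by restriction to level $\alpha_0$'' into genuine monochromaticity on $\bigcup_{\alpha \in A} \bigotimes_{i<d} T'_i(\alpha)$.
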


\section{Derived Trees}\label{section.Derived}

This section introduces derived trees and proves a
 theorem which will be central to the results in
 Section \ref{sec.4} about small forcings preserving
 various forms of the Halpern-\Lauchli\ Theorem.

\begin{definition}\label{defn.DerT}
Let $\kappa$ be a cardinal,
 $\mbb{P}$ be  a forcing, and
without loss of generality, assume that $\mbb{P}$ has a largest member, denoted $1$.
Assume that  $\dot{T}$ is  a $\mbb{P}$-name  for which
$1$
forces that
$ \dot{T}$ is a subtree of
${^{<\check{\kappa}} \check{\kappa}}$.
The \defemph{derived tree of} $\dot{T}$,
denoted $\mbox{Der}(\dot{T})$,
 is defined as follows.
The elements of $\mbox{Der}(\dot{T})$
 are equivalence classes of pairs
 $(\dot{\tau}, \alpha)$ satisfying
 \begin{equation}\label{eq.4}
1 \forces ( \dot{\tau} \in \dot{T} \mbox{ and }
 \mbox{Dom}(\dot{\tau}) = \check{\alpha}),
\end{equation}
 where the equivalence relation $\cong$ is
 defined by
 \begin{equation}\label{eq.5}
(\dot{\tau}_1, \alpha_1) \cong
  (\dot{\tau}_2, \alpha_2)
 \Longleftrightarrow
1 \forces (\dot{\tau}_1 = \dot{\tau}_2).
\end{equation}
Notice that if $(\dot{\tau}_1, \alpha_1) \cong
 (\dot{\tau}_2, \alpha_2)$, then $\alpha_1 = \alpha_2$.
The elements of $\mbox{Der}(\dot{T})$
 are ordered as follows:
 \begin{equation}
[(\dot{\tau}_1, \alpha_1)] <
  [(\dot{\tau}_2, \alpha_2)]
 \Longleftrightarrow
1 \forces
 (\dot{\tau}_1 \sqsubseteq \dot{\tau}_2
 \mbox{ and } \dot{\tau}_1 \not= \dot{\tau}_2).
\end{equation}
Given $S \subseteq \mbox{Der}(\dot{T})$, let
\begin{equation}
\mbox{Names}(S)=\{\dot{\tau}:(\exists \alpha)\,
 [(\dot{\tau},\alpha)] \in S\}.
\end{equation}
\end{definition}


We claim  that $1$ forces that
 every element of $\dot{T}$ is  equal to
 some element of $\mbox{Names}(\mbox{Der}(\dot{T}))$.
To see why, let $G$ be $\mbb{P}$-generic over $V$.
Let $t \in \dot{T}_G$ and
 $\alpha = \mbox{Dom}(t)$.
Fix a name  $\dot{\tau}$ such that
 $\dot{\tau}_G = t$, and
let $\dot{b}$ be a name for
 the leftmost branch of $\dot{T}$.
Define $\dot{\rho}$  so  that
\begin{equation}
 1 \forces [( \dot{\rho} = \dot{\tau}\mathrm{\ if\ }
 \mbox{Dom}(\dot{\tau}) = \check{\alpha})\wedge(
 \dot{\rho} = \dot{b} \restriction \alpha \mathrm{\ if\ }
 \mbox{Dom}(\dot{\tau}) \not= \check{\alpha})].
\end{equation}
Then  $[(\dot{\rho},\alpha)] \in \mbox{Der}(\dot{T})$ and
 $\dot{\rho}_G = t$.

We will now show that $\mbox{Der}(\dot{T})$
 is  (isomorphic to) a regular $\kappa$-tree in the ground model
 whenever $1$ forces that  $\dot{T}$ is a regular $\kappa$-tree,
 and that given an element named
 by some $(\dot{\tau},\alpha)
 \in \mbox{Der}(\dot{T})$,
 all  its successors
 are named by
 successors of
 $(\dot{\tau},\alpha)$ in
$\mbox{Der}(\dot{T})$.
This theorem is central to the forcing preservation theorems in following sections.

Given a tree $T \subseteq {^{<\kappa}\kappa}$
 and a node $t \in T$,
 by the $0$-th successor of $t$
 we mean the node $t ^\frown \alpha \in T$
 with the least possible $\alpha \in \kappa$.
More generally,
 the $\gamma$-th successor of $t \in T$
 is the node $t ^\frown \alpha \in T$
 such that the set
 $\{ \beta < \alpha : t ^\frown \beta \in T \}$
 has order type $\gamma$.

\begin{theorem}[Derived Tree Theorem]\label{mainlemma}
Let $\kappa$ be strongly inaccessible,
 $\mbb{P}$ a forcing of size $< \kappa$, and
 $\dot{T}$ a name for a regular $\kappa$-tree.
Then $\mbox{Der}(\dot{T})$ is isomorphic to a regular $\kappa$-tree and
\begin{enumerate}
\item[$(*)$]
 If $[(\dot{\tau}, \alpha)] \in
 \mbox{Der}(\dot{T})$ and $X$ is the
 set of all $\dot{\rho}$ such that
 $[(\dot{\rho}, \alpha+1)]$ is a successor
 of $[(\dot{\tau},\alpha)]$
 in $\mbox{Der}(\dot{T})$, then
 $1 \forces ($every successor of
 $\dot{\tau}$ in $\dot{T}$
 is named by an element of $\check{X})$.
\end{enumerate}
\end{theorem}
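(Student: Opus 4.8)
The plan is to establish the theorem in three stages: first verify that $\mathrm{Der}(\dot{T})$ is a well-defined tree of the right shape, then identify its successor structure (establishing $(*)$), and finally use $(*)$ together with inaccessibility to confirm the three defining properties of a regular $\kappa$-tree. The central technical tool throughout will be that $\mathbb{P}$ has size $< \kappa$, which lets us control names by reading off information across the fewer than $\kappa$ many conditions.

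First I would check that the order $<$ on $\mathrm{Der}(\dot{T})$ makes it a tree with levels indexed by $\alpha < \kappa$: the level of $[(\dot{\tau},\alpha)]$ is exactly $\alpha$ (since $1 \forces \mathrm{Dom}(\dot\tau) = \check\alpha$ and forced initial-segment relations are transitive and linear below any node), and closure under initial segments follows because $1$ forces $\dot{T}$ closed under initial segments, so for each $\beta < \alpha$ one can form the name $\dot\tau \restriction \check\beta$. The height is $\kappa$ because $1$ forces every maximal branch of $\dot{T}$ to have length $\kappa$, so for each $\alpha$ there is a name for a node of length $\alpha$ (e.g.\ via the leftmost branch $\dot{b}$), giving a nonempty level $\alpha$. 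The key quantitative point is that \emph{each level has size $< \kappa$}: a level-$\alpha$ element is determined by $1 \forces \dot\tau_1 = \dot\tau_2$, and since $1$ forces $\dot{T}(\check\alpha)$ to have size $< \kappa$, one shows that only $< \kappa$ many distinct forced values can occur. Here I would use that a name for an ordinal-valued sequence of length $\alpha < \kappa$ is decided by an antichain of size $\le |\mathbb{P}| < \kappa$, and $\kappa$ inaccessible bounds the number of such decided values; this is the step where I expect to spend the most care.

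For $(*)$, the claim is that the successors of $[(\dot\tau,\alpha)]$ in $\mathrm{Der}(\dot{T})$ name exactly the forced successors of $\dot\tau$ in $\dot{T}$. The forward inclusion (each successor of $[(\dot\tau,\alpha)]$ is named by a $\dot\rho$ with $1 \forces \dot\rho$ an immediate successor of $\dot\tau$) is immediate from the definition of the order. For the reverse direction I would argue in a generic extension: given $G$ and a successor $s$ of $\dot\tau_G = t$ in $\dot{T}_G$, the earlier claim (that $1$ forces every element of $\dot T$ is named by some element of $\mathrm{Names}(\mathrm{Der}(\dot T))$) supplies a $\dot\rho$ with $[(\dot\rho,\alpha+1)] \in \mathrm{Der}(\dot T)$ and $\dot\rho_G = s$; one then checks $[(\dot\rho,\alpha+1)]$ is genuinely a successor of $[(\dot\tau,\alpha)]$, using that the length and initial-segment data are forced by $1$.

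The main obstacle I anticipate is precisely the level-size bound: proving $|\mathrm{Der}(\dot T)(\alpha)| < \kappa$ requires converting the forcing statement ``$1 \forces |\dot{T}(\check\alpha)| < \check\kappa$'' into a ground-model count of $\cong$-equivalence classes of names, and this is where $|\mathbb{P}| < \kappa$ together with strong inaccessibility is essential. Once the level sizes are controlled, perfectness (property (3)) follows from $(*)$ and the forced perfectness of $\dot T$ — each node has a forced splitting above it, which by $(*)$ yields incomparable derived nodes — and the maximal-branch condition (2) follows similarly from the forced branch structure, completing the identification of $\mathrm{Der}(\dot T)$ with a regular $\kappa$-tree.
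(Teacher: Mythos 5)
Your verification that $\mathrm{Der}(\dot T)$ is a tree of height $\kappa$ with small levels is essentially the paper's argument in outline: extract a bound $g(\xi)<\kappa$ on the forced values at each coordinate from ``$1\Vdash\dot T$ is a $\kappa$-tree'' together with $|\mathbb{P}|<\kappa$, then count nice names, getting roughly $\prod_{\xi<\alpha}g(\xi)^{|\mathbb{P}|}<\kappa$ by strong inaccessibility. (One minor sloppiness: a single antichain does not decide a whole sequence of length $\alpha$; you need one maximal antichain per coordinate $\xi<\alpha$, but the count still goes through.)

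The genuine gap is in your proof of $(*)$. Given a generic $G$ and a successor $s$ of $\dot\tau_G$ in $\dot T_G$, you take the name $\dot\rho$ supplied by the claim that every element of $\dot T$ is named by an element of $\mathrm{Names}(\mathrm{Der}(\dot T))$, and then assert that one can ``check $[(\dot\rho,\alpha+1)]$ is genuinely a successor of $[(\dot\tau,\alpha)]$, using that the length and initial-segment data are forced by $1$.'' The length is indeed forced by $1$, but the initial-segment relation $\dot\tau\sqsubseteq\dot\rho$ is \emph{not}: the construction behind that claim falls back to (a restriction of) the leftmost branch of $\dot T$ on conditions where the original name misbehaves, and the leftmost branch need not pass through $\dot\tau$. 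So in general $\dot\rho\notin X$; the relation $s\sqsupseteq\dot\tau_G$ holds only in $V[G]$, whereas membership in $X$ requires the extension relation to be forced by $1$, and this conflation of ``true in $V[G]$'' with ``forced by $1$'' is exactly what $(*)$ has to overcome. The missing idea is a mixing step tailored to $\dot\tau$: in $V[G]$ let $\gamma$ be such that $s$ is the $\gamma$-th successor of $\dot\tau_G$ in $\dot T_G$, and let $\dot\rho$ be a name which $1$ forces to be the $\check\gamma$-th successor of $\dot\tau$ in $\dot T$ if it exists, and the $0$-th successor otherwise; then $1$ forces $\dot\rho$ to be a successor of $\dot\tau$ (so $\dot\rho\in X$) while still $\dot\rho_G=s$. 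This is how the paper proves $(*)$. The same issue undermines your plan to deduce perfectness of $\mathrm{Der}(\dot T)$ from $(*)$ and ``a forced splitting above each node'': the level and location of splitting in $\dot T$ above $\dot\tau$ depend on the generic, so one needs either the paper's explicit construction (the least $\beta$ at which \emph{some} condition forces the leftmost branch through $\dot\tau$ to split, with fallback names for its $0$-th and $1$-th successors) or a genuinely worked-out contrapositive argument; it does not follow just by quoting $(*)$ inside $V[G]$.
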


\begin{proof}
First note that if
 $[(\dot{\tau},\alpha)]$ is in
 $\mbox{Der}(\dot{T})$ and
 $\beta < \alpha$,
 then there is a name
 $\dot{\tau}_\beta$ such that
 $[(\dot{\tau}_\beta,\beta)]$
 is in $\mbox{Der}(\dot{T})$
 and  $[(\dot{\tau}_\beta,\beta)]< [(\dot{\tau},\alpha)]$:
 simply let $\dot{\tau}_\beta$ be a name
 for $\dot{\tau} \restriction \check{\beta}$.
We prove that $\mbox{Der}(\dot{T})$ is a regular
$\kappa$-tree by proving it satisfies conditions (1) - (3) of Definition
\ref{defn.regular}.

To verify (1), we must first show that
 $\mbox{Der}(\dot{T})$ is a tree.
Suppose $[(\dot{\tau}_1, \alpha_1)],
 [(\dot{\tau}_2, \alpha_2)],
 [(\dot{\tau}_3, \alpha_3)]$
 are members of  $\mbox{Der}(\dot{T})$
satisfying
\begin{equation}
[(\dot{\tau}_1, \alpha_1)]>
 [(\dot{\tau}_2, \alpha_2)] \mathrm{\ and\ }[(\dot{\tau}_1, \alpha_1)]>
 [(\dot{\tau}_3, \alpha_3)].
\end{equation}
Assume, without loss of generality,
 that
 $\alpha_2 \ge \alpha_3$.
Since  $1$  forces that $\dot{T}$ is a tree and that         $ \dot{\tau}_i$ is an initial segment of $\dot{\tau}_1$ of length $\alpha_i$, for $i\in \{2,3\}$,
it follows that $1$ forces that $\dot{\tau}_2\restriction \alpha_3=\dot{\tau}_3$.
Thus,
 $1$ forces that
$\dot{\tau}_3$ is an initial segment of
 $\dot{\tau}_2$,
and hence,
 $[(\dot{\tau_2}, \alpha_2)]$ and
 $[(\dot{\tau_3}, \alpha_2)]$
 are comparable in
 $\mbox{Der}(\dot{T})$.

For $\beta<\kappa$,
let {\em level $\beta$} denote the set of those $[(\dot{\tau},\alpha)]\in\mbox{Der}(\dot{T})$ such that $\alpha=\beta$.
The same argument as above also shows that given
 $[(\dot{\tau},\alpha)]$
 in $\mbox{Der}(\dot{T})$
 and $\beta < \alpha$,
 there is a \textit{unique}
 $[(\dot{\rho}, \beta)]$
 on level $\beta$ of
 $\mbox{Der}(\dot{T})$
 such that  $[(\dot{\rho}, \beta)]< [(\dot{\tau},\alpha)]$.
We have now established that
 $\mbox{Der}(\dot{T})$ is a tree.

We must now show that
 $\mbox{Der}(\dot{T})$
 is a $\kappa$-tree.
To show that it has height $\kappa$,
 given any $\alpha < \kappa$,
 let $\dot{\tau}_\alpha$ be such that
 $1 \forces (\dot{\tau}_\alpha =
 \dot{b} \restriction \check{\alpha})$,
 where $\dot{b}$ is a name for the leftmost branch of $\dot{T}$.
Then $[(\dot{\tau}_\alpha,\alpha)]
 \in \mbox{Der}(\dot{T})$.
Thus, $\mbox{Der}(\dot{T})$ has height $\kappa$.
To
 show that each level of
 $\mbox{Der}(\dot{T})$ has
 $< \kappa$ nodes, we will make
 use of the fact
 that $\mbox{Der}(\dot{T})$ consists
 of elements $[(\dot{\tau},\alpha)]$
 where $1 \forces (\mbox{Dom}(\dot{\tau})
 = \check{\alpha})$.
(If we drop the $\alpha$'s from
 the definition of
 $\mbox{Der}(\dot{T})$, we can verify $(*)$, and
(2) and  (3) of  Definition
\ref{defn.regular},  but not (1).)
Since $1 \forces (\dot{T}$ is a
 $\check{\kappa}$-tree$)$,
 we have that
 \begin{equation}\label{eq.10}
1 \forces
 (\forall \alpha < \check{\kappa})(\exists \gamma < \check{\kappa})
 (\forall t \in \dot{T})\,
 \alpha \in \mbox{Dom}(t)
 \Rightarrow
 t(\alpha) < \gamma.
\end{equation}
Since  $|\mbb{P}|<\kappa$,
 there is a function $g : \kappa \to \kappa$
 such that
 \begin{equation}\label{eq.11}
1 \forces (\forall \alpha < \check{\kappa})(\forall t \in \dot{T})\,
 \alpha \in \mbox{Dom}(t)
 \Rightarrow
 t(\alpha) <
 \check{g}(\alpha).
\end{equation}

Now, to each  pair
 $(\dot{\tau}, \alpha)$, where
 $[(\dot{\tau}, \alpha)] \in
 \mbox{Der}(\dot{T})$, we
 may associate   a sequence
 $\langle f_\xi : \xi < \alpha \rangle$,
 where each $f_\xi$ is a function
 from some maximal antichain of
 $\mbb{P}$ to $g(\xi)$.
This sequence represents a nice name for $\dot{\tau}$.
Since $|\mbb{P}|<\kappa$,
 level $\alpha < \kappa$ of
 $\mbox{Der}(\dot{T})$ is bounded from above
 by the following:
\begin{equation}\label{eq.12}
\prod_{\xi < \alpha}
 g(\xi)^{|\mbb{P}|}.
\end{equation}
Since $\kappa$ is strongly inaccessible,
 this bound is $<\kappa$.
We have now shown (1) of  Definition
\ref{defn.regular}.

We will now verify (2),
 that $\mbox{Det}(\dot{T})$ has no maximal
 branches of length $< \kappa$.
When we later show that $\mbox{Der}(\dot{T})$ is perfect,
 this will imply it has no maximal branches
 of a successor ordinal length.
Thus, it suffices to show $\mbox{Der}(\dot{T})$
 has no maximal branches of limit length.
Let $\eta < \kappa$ be a limit ordinal and
 $S = \langle [(\dot{\tau}_\alpha,\alpha)] :
 \alpha < \eta \rangle$ is an increasing chain in $\mbox{Der}(\dot{T})$ so that  for all $\xi<\zeta<\eta$,
\begin{equation}
[(\dot{\tau}_{\xi}, \alpha_{\xi})] <
  [(\dot{\tau}_{\zeta}, \alpha_{\zeta})].
\end{equation}
Let $\dot{s}$ be a name  which $\mbb{P}$
 forces to be a function from  $\check{\eta}$ to
 ${^{<\check{\kappa}}\check{\kappa}}$ such that
 for all $\alpha < \eta$,
\begin{equation}\label{eq.13}
 1 \forces \dot{s}(\check{\alpha})
 = \dot{\tau}_\alpha.
\end{equation}
By the definition of the tree relation $<$ in $\mbox{Der}(\dot{T})$,
it follows that
\begin{equation}
\label{eq_2}
1 \forces (\forall \alpha < \beta < \check{\eta})\,
 \dot{s}(\alpha) \sqsubseteq \dot{s}(\beta).
\end{equation}
Now let $\dot{\tau}_\eta$ be a name such that
 $1 \forces \dot{\tau}_\eta =
 \bigcup_{\alpha < \check{\eta}} \dot{s}(\alpha)$.
It follows from  (\ref{eq_2}) that
\begin{equation}
 1 \forces \dot{\tau}_\eta \in
 {^{\check{\eta}} \check{\kappa}}\mathrm{\ and\ }
 (\forall \alpha < \check{\eta})\,
 \dot{s}(\alpha) \sqsubseteq \dot{\tau}_\eta,
\end{equation}
and thus, by (\ref{eq.13}),
\begin{equation}
1 \forces (\forall \alpha < \check{\eta})\,
 \dot{\tau}_\alpha \sqsubseteq \dot{\tau}_\eta.
\end{equation}
Since $\mbb{P}$  forces that $\dot{T}$ has no maximal branches
 of length $<\kappa$,  we now have that
 \begin{equation}
1 \forces \dot{\tau}_\eta \in \dot{T}.
\end{equation}
So now $[(\dot{\tau}_\eta,\eta)] \in
 \mbox{Der}(\dot{T})$,
 and this node is above each
 $[(\dot{\tau}_\alpha, \alpha)]$
 for $\alpha < \eta$.
Thus, we have verified (2) of Definition
\ref{defn.regular}.

To verify (3),
 consider any $[(\dot{\tau},\alpha)] \in
 \mbox{Der}(\dot{T})$.
Let $\dot{b}$ be a name for
 the leftmost
 branch of $\dot{T}$ which extends
 $\dot{\tau}$.
Let $\beta<\kappa$ be the least ordinal  greater than or equal to  $\alpha$
 for which  there is some $p \in \mbb{P}$
which forces that
there are at least two
 successors of $\dot{b} \restriction
 \check{\beta}$ in the tree $\dot{T}$.
Such $\beta$ and $p$ exist since $\mbb{P}$ forces that $\dot{T}$ is a perfect tree.
Let $\dot{\tau}_1$ be a name such that
 $1$ forces  $\dot{\tau}_1 = \dot{b}
 \restriction \check{\beta}$.
Let $\dot{\tau}_2$ be a name  which
 $1$ forces to be the $0$-th successor of $\dot{\tau}_1$
 in $\dot{T}$.
Finally, let  $\dot{\tau}_3$ be a name which
 $1$ forces  to be  the
 $1$-th successor of $\dot{\tau}_1$
 in $\dot{T}$,
 if there are at least two successors,
 and the unique successor if there is only one successor.
One can see that
 $[(\dot{\tau}_2, \beta + 1)]$ and
 $[(\dot{\tau}_3, \beta + 1)]$
 are successors of
 $[(\dot{\tau}_1, \beta)]$ in
 $\mbox{Der}(\dot{T})$.
Since there is some $p$  which
 forces $\dot{\tau}_2 \not=
 \dot{\tau}_3$,
it follows that
 $[(\dot{\tau}_2, \beta + 1)] \not=
  [(\dot{\tau}_3, \beta + 1)]$.
Thus,
 $[(\dot{\tau}_2, \beta + 1)]$ and
 $[(\dot{\tau}_3, \beta + 1)]$
 are incomparable extensions of
 $[(\dot{\tau}, \alpha)]$
 in $\mbox{Der}(\dot{T})$.
Therefore, $\mbox{Der}(\dot{T})$ is a perfect tree.
 Hence,  $\mbox{Der}(\dot{T})$ is isomorphic to  a regular $\kappa$-tree.

Finally, the verification of ($*$)
 follows almost immediately from
 the definition of
 $\mbox{Der}(\dot{T})$.
Fix
 $[(\dot{\tau},\alpha)] \in \mbox{Der}(\dot{T})$ and
let $G$ be $\mbb{P}$-generic over $V$.
Let $s$ be an arbitrary successor of
 $\dot{\tau}_G$ in $\dot{T}_G$, and
let $\gamma$ be such that
 $s$ is the $\gamma$-th successor of
 $\dot{\tau}_G$ in $\dot{T}_G$.
Take  $\dot{\rho}$ to be a name  so that
 $1$ forces that $\dot{\rho}$ is the $\check{\gamma}$-th
 successor of $\dot{\tau}$ in $\dot{T}$,
 if it exists,
and the $0$-th successor, otherwise.
Then
 $\dot{\rho}_G = s$.
At the same time
 $1$ forces that $\dot{\rho}$
 is a successor of $\dot{\tau}$
 in $\dot{T}$,
 so
 $[(\dot{\rho},\alpha+1)]$ is a member of
$ \mbox{Der}(\dot{T})$.
\end{proof}

\begin{remark}\label{rem.kappaccnotenough}
There are two instances in the proof where $|\mathbb{P}|<\kappa$ was used.
The first is non-essential:
If $\mathbb{P}$ is $\kappa$-c.c., or even just    $(\kappa, \kappa, <\kappa)$-distributive,
equation (\ref{eq.11}) still  holds.
However, the  second use of $|\mathbb{P}|<\kappa$ 
is essential to the proof.
Indeed, given any  $\mathbb{P}$  which  preserves $\kappa$  and has cardinality at least $\kappa$, 
 there is  a name $\dot{T}$ for a regular $\kappa$-tree with the following properties:
 $1$ forces that the first
 level of $\dot{T}$ has size at least two
 (with say elements $\langle 0 \rangle$ and $\langle 1 \rangle$),
 and 
  letting $\{p_{\zeta}:\zeta<\kappa\}$  be a set of distinct members of $\mathbb{P}$,
there are nice names $\dot{\tau}_{\zeta}$
so that
\begin{equation}
p_{\zeta}\forces
\dot{\tau}_{\zeta}\in\dot{T},\
\Dom(\dot{\tau}_{\zeta})=1, \mathrm{\ and \ }\dot{\tau}_{\zeta}(0)=0,
\end{equation}
 and all  $q\in\mathbb{P}$ incompatible with $p_{\zeta}$ force $\dot{\tau}_{\zeta}(0)=1$.
Then for all $\zeta<\xi<\kappa$,
$(\dot{\tau}_{\zeta},1)\not\cong (\dot{\tau}_{\xi},1)$,
so  the first level of Der$(\dot{T})$ has size at least $\kappa$.
Thus,  the  $\kappa$-c.c.\ is not enough to guarantee that the levels of  Der$(\dot{T})$ have size less than $\kappa$.
\end{remark}

The Derived Tree Theorem will allow us to
use $\mbox{Der}(\dot{T})$
 in theorems that require subtrees of
 ${^{<\kappa}\kappa}$.
Note also  that the Derived Tree Theorem provides the following:
Given a name for a regular $\kappa$-tree $\dot{T}$,
 a strong subtree $S$ of
 $\mbox{Der}(\dot{T})$
 with splitting levels $A \subseteq \kappa$,
 and a $\mbb{P}$-generic $G$,
 the set
 $W = \{ \dot{\tau}_G :
 \dot{\tau} \in \mbox{Names}(S) \}$
 is a strong subtree of $\dot{T}_G$,
 witnessed by the set of splitting
 levels $A$.


\section{Small Forcings Preserve
 $\mbox{SDHL}$, $\mbox{HL}$, and $\mbox{HL}^{tc}$}\label{sec.4}

In this section we show that if
 $\kappa$ is strongly inaccessible
 and $(\forall \sigma < \kappa)\,
 \mbox{SDHL}(d,\sigma,\kappa)$ holds,
 then this still holds after performing any
 forcing of size less than $\kappa$.
This result then automatically holds  for HL replacing SDHL, since the two are equivalent  for $\kappa$ inaccessible.
Further, we show that
 $\mbox{HL}^{tc}$ at $\kappa$ is preserved by forcings of size less than $\kappa$.
These results make strong use  of
 the Derived Tree Theorem from the previous section.

\begin{theorem}\label{thm.4.1}
\label{sdhl_is_preserved}
Let $\kappa$ be strongly inaccessible.
Let $1 \le d < \omega$ and
 $0 < \sigma < \kappa$.
Let $\mbb{P}$ be a forcing of size
 $< \kappa$.
Assume that
 $\mbox{SDHL}(d,\sigma \cdot |\mbb{P}|,\kappa)$
 holds.
Then $\mbox{SDHL}(d,\sigma,\kappa)$ holds
 after forcing with $\mbb{P}$.
In particular,  the statement ``$(\forall \sigma<\kappa)\ \mbox{SDHL}(d,\sigma,\kappa)$ holds" is preserved by all forcings of size less than $\kappa$.
\end{theorem}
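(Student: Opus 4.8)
The plan is to prove the stronger assertion that $1 \forces \mbox{SDHL}(d,\sigma,\kappa)$, reducing everything to a single ground-model application of $\mbox{SDHL}(d,\sigma\cdot|\mbb{P}|,\kappa)$ via the Derived Tree Theorem. Since every regular $\kappa$-tree and every coloring appearing in a generic extension is named in $V$, it suffices to fix names $\langle \dot{T}_i : i<d\rangle$ and $\dot{c}$ together with a condition $p_0$ forcing that the $\dot{T}_i$ are regular $\kappa$-trees and that $\dot{c}:\bigotimes_{i<d}\dot{T}_i\to\check\sigma$ is a coloring, and to produce some $q\le p_0$ forcing the existence of a monochromatic somewhere dense level matrix. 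Densely many conditions will then force this existence, so $1$ forces $\mbox{SDHL}(d,\sigma,\kappa)$. After the routine maneuver of redefining the names below conditions incompatible with $p_0$, I may assume that $1$ itself forces each $\dot{T}_i$ to be a regular $\kappa$-tree, so that the Derived Tree Theorem applies to each.

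Next I would move the whole problem into $V$. By the Derived Tree Theorem each $D_i:=\mbox{Der}(\dot{T}_i)$ is isomorphic to a regular $\kappa$-tree, so $\mbox{SDHL}(d,\sigma\cdot|\mbb{P}|,\kappa)$ may be applied to the $D_i$. Fixing a well-order of $\mbb{P}$, I define a ground-model coloring $c^*:\bigotimes_{i<d}D_i\to\{q:q\le p_0\}\times\sigma$ sending a level tuple $\langle[(\dot\tau_i,\beta)]:i<d\rangle$ to the pair $(q,\xi)$, where $q$ is the least, in the fixed well-order, among conditions $q'\le p_0$ that decide $\dot{c}(\langle\dot\tau_i:i<d\rangle)$, and $\xi$ is that decided value. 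This is well-defined and independent of representatives, since $\cong$-equivalent names are forced equal; and its range has size at most $\sigma\cdot|\mbb{P}|<\kappa$, which is precisely why $\mbox{SDHL}(d,\sigma\cdot|\mbb{P}|,\kappa)$ is the correct hypothesis. Applying it yields a somewhere dense level matrix $\langle Y_i\subseteq D_i(\beta):i<d\rangle$ dominating $D_i(\alpha+1)\cap\mbox{Cone}(e_i)$ for nodes $e_i=[(\dot{t}_i,\alpha)]$, on which $c^*$ is constantly some $(q^*,\xi^*)$ with $q^*\le p_0$.

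Finally I would force below $q^*$ and pull the matrix back. For any generic $G\ni q^*$, set $X_i:=\{\dot\tau_G:\dot\tau\in\mbox{Names}(Y_i)\}$ and $t_i:=(\dot{t}_i)_G$. Monochromaticity is immediate: every tuple of $\bigotimes_{i<d}X_i$ arises from a tuple of $\bigotimes_{i<d}Y_i$ whose $c^*$-color is $(q^*,\xi^*)$, so $q^*\forces\dot{c}(\langle\dot\tau_i\rangle)=\check\xi^*$, and since $q^*\in G$ the actual color is $\xi^*$. That $\langle X_i\rangle$ is a somewhere dense level matrix is where property $(*)$ of the Derived Tree Theorem does the work: given $y\in T_i(\alpha+1)\cap\mbox{Cone}(t_i)$ in $V[G]$, property $(*)$ guarantees $y=\dot\rho_G$ for some $\dot\rho$ with $[(\dot\rho,\alpha+1)]$ a successor of $e_i$ in $D_i$; as $Y_i$ dominates that successor set in $V$, some $\dot\sigma\in\mbox{Names}(Y_i)$ satisfies $1\forces\dot\rho\sqsubseteq\dot\sigma$, giving a dominating $\dot\sigma_G\in X_i$ above $y$. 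The ``in particular'' clause then follows by applying the main statement to each $\sigma<\kappa$, using that $\sigma\cdot|\mbb{P}|<\kappa$ whenever $|\mbb{P}|<\kappa$ and $\kappa$ is inaccessible.

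I expect the main obstacle to be the transfer of the domination (``somewhere dense'') requirement through the generic extension. Unlike monochromaticity, which transfers essentially by construction of $c^*$, domination is a statement about successors in the generic trees and can only be controlled in the ground model through property $(*)$, which identifies the successors of a derived-tree node with the names of the successors in $\dot{T}_i$. The second delicate point is the bookkeeping that keeps the color set at size $\sigma\cdot|\mbb{P}|$ rather than $\sigma^{|\mbb{P}|}$: this is exactly what forces the device of fixing $p_0$ in advance and recording only a single least deciding condition together with its value, and it is also what makes the reduction to ``$1\forces\mbox{SDHL}$'' go through.
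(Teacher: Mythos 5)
Your proposal is correct and follows essentially the same route as the paper's proof: both reduce to a density argument over conditions, move the problem into the ground model via the Derived Tree Theorem, color level tuples of derived-tree nodes by pairs consisting of a deciding condition and the decided value, apply $\mbox{SDHL}(d,\sigma\cdot|\mbb{P}|,\kappa)$ there, and pull the resulting matrix back through $\mbox{Names}$ using property $(*)$. The remaining differences (fixing a well-order to select the deciding condition, and the explicit check that the coloring is well-defined on $\cong$-classes) are cosmetic refinements of the same argument.
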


\begin{proof}
Let $\langle \dot{T}_i : i < d \rangle$
 be a sequence of names for regular trees
 in the extension.
That is,
 $(\forall i < d)\, 1 \forces (\dot{T}_i \subseteq
 {^{<\check{\kappa}} \check{\kappa}}$
 is regular$)$.
Let $\dot{c}$ be such that
 \begin{equation}
1 \forces \dot{c} :
 \bigotimes_{i < \check{d}}
 \dot{T}_i \to \check{\sigma}.
\end{equation}
We must show that
 $\mbb{P}$ forces that  there is a somewhere dense
 level matrix $\langle X_i \subseteq \dot{T}_i :
 i < \check{d} \rangle$
 such that $|\dot{c}``\bigotimes_{i < \check{d}}
 X_i| = 1$.
We will do this by showing that for each $p\in\mbb{P}$, there is some $p'\le p$ forcing this statement.
Fix $p \in \mbb{P}$.

Consider the trees
 $\mbox{Der}(\dot{T}_i)$ for $i < d$.
Let
\begin{equation}
c' : \bigotimes_{i < d}
 \mbox{Der}(\dot{T}_i) \to \sigma \times \mbb{P}
\end{equation}
 be a coloring defined so  that for any
 $\alpha < \kappa$ and any level $d$-tuple
 \begin{equation}
\vec{t} = \langle [(\dot{\tau}_i, \alpha)]
 \in \mbox{Der}(\dot{T}_i) : i < d \rangle,
\end{equation}
 $c'(\vec{t}\,) = \langle \sigma', q \rangle$
 where $\sigma'$ and $q$ satisfy
 $q \le p$ and
\begin{equation}
 q \forces
 \dot{c}\langle \dot{\tau}_i : i < d \rangle
 = \check{\sigma}'.
\end{equation}
That is, the $c'$-color of $\vec{t}$
 is a pair $\langle \sigma', q \rangle
 \in \sigma \times \mbb{P}$
 where $q$ forces the $\dot{c}$-color of
 the tuple named by $\vec{t}$ to have
 the color $\check{\sigma}'$.

Since $\mbox{SDHL}(d,
 \sigma \cdot |\mbb{P}|, \kappa)$ holds,
 there is a somewhere dense level matrix
 $\langle Y_i \subseteq
 \mbox{Der}(\dot{T}_i) : i < d \rangle$
 that is $c'$-monochromatic.
Let $\xi< \kappa$ be such that each
 $Y_i$ is on level $\xi$ of $\mbox{Der}(\dot{T})$.
Fix $\zeta < \xi$ and
 for each $i < d$,
 fix $[(\dot{\delta}_i, \zeta)]
 \in \mbox{Der}(\dot{T})$ such that
 $Y_i$ dominates the set of successors of
 $[(\dot{\delta}_i, \zeta)]$ in
 $\mbox{Der}(\dot{T}_i)$.
Let $\langle \sigma', p' \rangle$ be the unique
 color assigned to each element of
 $\bigotimes_{i < d} Y_i$ by $c'$.
Hence for all
 $\langle [(\dot{\tau}_i, \xi)] \in Y_i
 : i < d \rangle$,
\begin{equation}
\label{eq_3}
 p' \forces
 \dot{c}\langle \dot{\tau}_i : i < d \rangle
 = \check{\sigma}'.
\end{equation}

We  now show that $p' $ forces that there is a
 $\dot{c}$-monochromatic
 somewhere dense
 level matrix of $\langle \dot{T}_i : i < d \rangle$.
Let $G$ be any $\mbb{P}$-generic  over $V$ containing   $p'$.
It suffices to show that
 in $V[G]$, there is a $\dot{c}_G$-monochromatic
 somewhere dense level matrix of
 $\langle (\dot{T}_i)_G : i < d \rangle.$

For each $i < d$, let
\begin{equation}
X_i = \{ \dot{\tau}_G :
 \dot{\tau} \in \mbox{Names}(Y_i) \}.
\end{equation}
By ($*$) of Theorem~\ref{mainlemma},
 \begin{equation}
(\forall i < d)\,
 X_i \mbox{ dominates the successors of }
 (\dot{\delta}_i)_G \mbox{ in } (\dot{T}_i)_G,
\end{equation}
 so $\langle X_i : i < d \rangle$
 is a somewhere dense level matrix
 of $\langle (T_i)_G : i < d \rangle$.
By (\ref{eq_3})
 and since $p' \in G$,
 we have that
 $$\dot{c}_G``
 \bigotimes_{i < d}
 X_i = \{ \sigma' \},$$
 so $\langle X_i : i < d \rangle$
 is $\dot{c}_G$-monochromatic.
\end{proof}

The Derived Tree Theorem also implies
 that $\mbox{HL}^{tc}$ is preserved by
 small forcings, as we will now see.

\begin{theorem}\label{thm.4.2}
\label{hltail_preserved}
Let $\kappa$ be strongly inaccessible.
Let $1 \le d < \omega$.
Let $\mbb{P}$ be a forcing of size
 $< \kappa$.
Assume that
 $\mbox{HL}^{tc}(d,{<\kappa},\kappa)$
 holds.
Then $\mbox{HL}^{tc}(d,{<\kappa},\kappa)$ holds
 after forcing with $\mbb{P}$.
\end{theorem}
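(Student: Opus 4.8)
The plan is to mimic the structure of the proof of Theorem~\ref{thm.4.1}, using the Derived Tree Theorem to pull the problem back into the ground model, but now working with the modified version $\mbox{HL}^{tc}$ furnished by Proposition~\ref{prop.2.10}, since that version is the one that transfers cleanly. First I would let $\langle \dot T_i : i<d\rangle$ be names forced by $1$ to be regular $\kappa$-trees, let $\langle \dot\sigma_\zeta : \zeta<\kappa\rangle$ be a name for a sequence of nonzero cardinals below $\kappa$, and let $\langle \dot c_\zeta : \zeta<\kappa\rangle$ be names for colorings $\dot c_\zeta : \bigotimes_{i<d}\dot T_i\to\dot\sigma_\zeta$. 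As before it suffices to fix an arbitrary $p\in\mbb P$ and produce some $p'\le p$ forcing the existence of the desired strong subtrees. Passing to the derived trees $\mbox{Der}(\dot T_i)$, which are regular $\kappa$-trees in $V$ by Theorem~\ref{mainlemma}, I would define ground-model colorings $c'_\zeta$ on $\bigotimes_{i<d}\mbox{Der}(\dot T_i)$ that, on a level $d$-tuple $\vec t=\langle[(\dot\tau_i,\alpha)]:i<d\rangle$, record a pair $\langle \sigma',q\rangle$ with $q\le p$ and $q\forces \dot c_\zeta\langle\dot\tau_i:i<d\rangle=\check\sigma'$. Because $|\mbb P|<\kappa$ and $\kappa$ is inaccessible, each $c'_\zeta$ takes fewer than $\kappa$ values, so $\mbox{HL}^{tc}(d,{<\kappa},\kappa)$ applies to the sequence $\langle c'_\zeta : \zeta<\kappa\rangle$ in $V$.

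Applying $\mbox{HL}^{tc}$ in the ground model yields strong subtrees $S_i\subseteq\mbox{Der}(\dot T_i)$, all witnessed by a common splitting set $A=\{\alpha_\gamma:\gamma<\kappa\}$, such that for all $\zeta\le\xi<\kappa$ the $c'_\zeta$-color of a tuple on level $\alpha_\xi$ equals its $c'_\zeta$-color after restriction to level $\alpha_\zeta$. The key obstacle I anticipate is that the color $c'_\zeta(\vec t\,)$ carries both a value $\sigma'$ \emph{and} a forcing condition $q$, and the $q$-components of the colors on different levels need not coincide even when the colors agree; so I cannot simply read off a single $p'$ that works for every $\zeta$ simultaneously. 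To handle this I would exploit the modified $\mbox{HL}^{tc}$ of Definition~\ref{tcmod} together with Proposition~\ref{prop.2.10}: the monochromaticity along the tail cone means that for each fixed $\zeta$ there is a \emph{single} color $\langle\sigma'_\zeta,q_\zeta\rangle$ governing all sufficiently high tuples lying above a fixed stem, and the condition $q_\zeta$ forces the corresponding $\dot c_\zeta$-equation. The genuine difficulty is choosing one condition $p'\le p$ that lies in a generic $G$ and simultaneously witnesses the tail-cone property for all $\zeta<\kappa$ at once.

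To resolve this I would argue as follows: take any $\mbb P$-generic $G$ containing $p$, and in $V[G]$ define $T'_i=\{\dot\tau_G : \dot\tau\in\mbox{Names}(S_i)\}$. By the remark following Theorem~\ref{mainlemma}, each $T'_i$ is a strong subtree of $(\dot T_i)_G$ witnessed by $A$. It remains to check that condition (2) of Definition~\ref{defn.tailcone} holds for the colorings $(\dot c_\zeta)_G$ with respect to $A$. Fix $\zeta\le\xi<\kappa$ and a tuple $\langle t_i\in T'_i(\alpha_\xi):i<d\rangle$; choose names $\dot\tau_i\in\mbox{Names}(S_i)$ with $(\dot\tau_i)_G=t_i$, so $\vec t=\langle[(\dot\tau_i,\alpha_\xi)]:i<d\rangle$ lies in $S_i$ on level $\alpha_\xi$, and its restriction to level $\alpha_\zeta$ is named by the corresponding initial segments. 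The ground-model monochromaticity gives that $c'_\zeta$ assigns $\vec t$ and its restriction the \emph{same} pair $\langle\sigma',q\rangle$, so the single condition $q\le p$ forces both $\dot c_\zeta\langle\dot\tau_i:i<d\rangle=\check\sigma'$ and $\dot c_\zeta\langle\dot\tau_i\restriction\alpha_\zeta:i<d\rangle=\check\sigma'$. Provided $q\in G$, it follows in $V[G]$ that $(\dot c_\zeta)_G\langle t_i:i<d\rangle=(\dot c_\zeta)_G\langle t_i\restriction\alpha_\zeta:i<d\rangle$, which is exactly (2).

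The last point to nail down — and where care is needed — is that the relevant conditions $q$ actually belong to the generic $G$ we use to define $p'$. Since there are potentially $\kappa$-many constraints (one per pair $\zeta\le\xi$) but only $<\kappa$-many conditions in $\mbb P$, I would observe that the $q$-component appearing in each monochromatic block is forced, by the tail-cone equality, to be one of fewer than $\kappa$ possible conditions; by absorbing the forcing condition into the color value as a \emph{product} coloring $c'_\zeta:\bigotimes_{i<d}\mbox{Der}(\dot T_i)\to\dot\sigma_\zeta\times\mbb P$ and noting $|\dot\sigma_\zeta\times\mbb P|<\kappa$, the common color on each tail cone is a \emph{fixed} pair whose second coordinate is a single condition. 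I would then let $p'$ be a condition below $p$ extending (or identified with) the finitely-determined stem data and work inside any generic $G\ni p'$; the uniformity of the derived-tree construction ensures that the one condition $p'$ simultaneously validates every $\dot c_\zeta$-equation, so $p'$ forces that $\langle T'_i:i<d\rangle$ witnesses $\mbox{HL}^{tc}(d,{<\kappa},\kappa)$ in $V[G]$. Since $p$ was arbitrary, the set of such $p'$ is dense, completing the proof.
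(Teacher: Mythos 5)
Your setup is exactly the paper's: pass to the derived trees, encode each $\dot c_\zeta$ as a ground-model coloring $c'_\zeta$ with values in $\sigma_\zeta\times\mbb{P}$ (recording a condition $q\le p$ and the value it forces), and apply $\mbox{HL}^{tc}(d,{<\kappa},\kappa)$ in $V$ to get strong subtrees $S_i\subseteq\mbox{Der}(\dot T_i)$ with common splitting levels $A=\{\alpha_\zeta:\zeta<\kappa\}$. You also correctly identify the obstacle: the homogeneous $c'_\zeta$-color of a cone is a pair $\langle\sigma',q\rangle$, and the condition $q$ depends both on $\zeta$ and on which tuple of level $\alpha_\zeta$ you sit above. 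But your resolution of this obstacle does not work. The fact that there are fewer than $\kappa$ many conditions in $\mbb{P}$ is no help: distinct conditions $q$ appearing as second coordinates can be pairwise \emph{incompatible}, so no single $p'\le p$ extends them all and no generic filter contains them all. The appeal to ``the uniformity of the derived-tree construction'' and to ``finitely-determined stem data'' is not an argument --- there is no stem data in this situation, and nothing in the construction of $\mbox{Der}(\dot T_i)$ or in the tail-cone property forces the $q$-components attached to different base tuples and different indices $\zeta$ to cohere. As written, your verification of clause (2) in $V[G]$ silently requires $\kappa$ many possibly incompatible conditions to lie in one filter.

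What is missing is a second round of homogenization, and this is precisely what the paper does. Define a single coloring $P:\bigcup_{\zeta<\kappa}\bigotimes_{i<d}S_i(\alpha_\zeta)\to\mbb{P}$ which sends a tuple on the $\zeta$-th splitting level $\alpha_\zeta$ to the $q$-component of its $c'_\zeta$-color (note this coloring is level-dependent: on level $\alpha_\zeta$ it reads off $c'_\zeta$). Since $\mbox{HL}^{tc}$ implies $\mbox{HL}(d,|\mbb{P}|,\kappa)$ (Observation~\ref{obs1}), one can apply $\mbox{HL}$ to $P$ to obtain strong subtrees $U_i\subseteq S_i$ with splitting levels $B\subseteq A$ on which $P$ is constantly a single condition $p'\le p$. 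Now $p'$ alone forces every equation $\dot c_\zeta\langle\dot\tau_i\rangle=\dot c_\zeta\langle\dot\tau_i\restriction\alpha_\zeta\rangle$ for tuples from the $U_i$'s with $\alpha_\zeta\in B$. The price of this second application is that $\alpha_\zeta$ is the $\zeta$-th splitting level of the $S_i$'s but not of the $U_i$'s; writing $B=\{\beta_\gamma:\gamma<\kappa\}$ and $\beta_\gamma=\alpha_{\mu(\gamma)}$, what $p'$ forces is exactly the \emph{modified} tail-cone property of Definition~\ref{tcmod} with witness function $\mu$ --- this, and not a clean transfer, is the real reason Proposition~\ref{prop.2.10} is needed. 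With that step inserted (and your $T_i'$ replaced by the trees generated by $\mbox{Names}(U_i)$ under a generic containing $p'$), your argument becomes the paper's proof.
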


\begin{proof}
The proof is similar to that of the previous theorem.
Fix $p \in \mbb{P}$.
By Proposition~\ref{kappamanyisokay},
 it suffices to find a $p' \le p$ that forces
 the modified version of $\mbox{HL}^{tc}$.
Let $\langle \dot{T}_i : i < d \rangle$
 be a sequence of names for regular trees,
and let $\langle \dot{c}_{\zeta} : \zeta < \kappa \rangle$  be
a sequence of names for colorings
such that $\mbb{P}$ forces each $\dot{c}_{\zeta}$
 to take less than $\check{\kappa}$ colors.
Since $|\mbb{P}| < \kappa$,
 there are ordinals $\sigma_{\zeta}<\kappa$ for $\zeta<\kappa$
 such that
\begin{equation}
 1 \forces \dot{c}_{\zeta} :
 \bigotimes_{i < \check{d}} \dot{T}_i \to
 \check{\sigma}_{\zeta}.
\end{equation}

Just as in the previous theorem,
 the sequence of colorings
 $\langle \dot{c}_{\zeta} : {\zeta} < \kappa \rangle$
 induces a sequence of colorings
 $\langle c'_{\zeta} : \zeta < \kappa \rangle$
 where for each ${\zeta} < \kappa$,
 \begin{equation}
c'_{\zeta} :
 \bigotimes_{i < d} \mbox{Der}(\dot{T}_i)
 \to \sigma_{\zeta} \times \mbb{P}.
\end{equation}
Now apply $\mbox{HL}^{tc}(d,{<\kappa},\kappa)$
 to the sequence of the trees
 $\langle \mbox{Der}(\dot{T}_i) : i < d \rangle$
 and the sequence of colorings
 $\langle c'_{\zeta} : \zeta < \kappa \rangle$.
What results is a sequence of strong subtrees
 $S_i \subseteq \mbox{Der}(\dot{T}_i)$
 for $i < d$, all witnessed by the same set
 of splitting levels $A \subseteq \kappa$.
Let $A$ be enumerated in increasing order as
 $A = \{\alpha_{\zeta} : \zeta < \kappa \}$.
For each pair of ordinals $\zeta \le \xi < \kappa$,
 given any $d$-tuple
 $\vec{t} \in \bigotimes_{i < d} S_i(a_\xi)$,
 the $c'_{\zeta}$-th color of
 that tuple is the same as the $c'_{\zeta}$-th color of
 that tuple restricted to
 $\bigotimes_{i < d} S_i(\alpha_{\zeta})$.

Recall that for the colorings
 $\langle c'_{\zeta} : {\zeta} < \kappa \rangle$,
 the $c'_{\zeta}$-th color of a tuple is a pair
 $\langle \sigma', q \rangle$.
If the $q$-component of all the tuples
 from the splitting levels of
 the $S_i$ trees are the same,
 then that $q$ forces that the colorings
 are homogenized in the desired way.
In that case, we can set $p' = q$ and be done.
So, the challenge now is to
 further homogenize to make the $q$'s the same.

Let $P : \bigcup_{{\zeta} < \kappa}
 \bigotimes_{i < d}
 S_i(\alpha_{\zeta}) \to \mbb{P}$
 be the following coloring:
 Given ${\zeta} < \kappa$ and
 $\vec{t} = \langle t_i \in S_i(\alpha_{\zeta}) : i < d \rangle$,
 define
\begin{equation}
P(\vec{t}\,) =
 \mbox{ the }q \mbox{-component of }
 c_{\zeta}' \langle t_i : i < d \rangle.
\end{equation}

We mentioned in Observation~\ref{obs1} that
 $\mbox{HL}^{tc}(d, {<\kappa}, \kappa)$ implies
 $\mbox{HL}(d,\sigma,\kappa)$ for all $\sigma < \kappa$.
Apply $\mbox{HL}(d,|\mbb{P}|,\kappa)$ to obtain
  trees $U_i \subseteq S_i$ for $i < d$,
 such that each $U_i$ is a strong subtree of
 $T_i$ as witnessed by a set of splitting
 levels $B \subseteq A$,
 such that for some fixed $p'$,
 \begin{equation}
P`` \bigcup_{\beta \in B}
 \bigotimes_{i<d} U_i(\beta) = \{ p' \}.
 \end{equation}

Now we have that for any
 two ordinals
 $\zeta \le \xi < \kappa$ such that $\alpha_{\zeta} \in B$,
 given any $\vec{t} = \langle [(\dot{\tau}_i, \alpha_\xi)]
 \in U_i(\alpha_\xi) : i < d \rangle$,
 \begin{equation}\label{eq.use}
p' \forces
 \dot{c}_{\zeta} \langle \dot{\tau}_i : i < d \rangle =
 \dot{c}_{\zeta} \langle
 \dot{\tau}_i \restriction \alpha_{\zeta} : i < d \rangle.
\end{equation}

We are now almost done.
The only problem is that each $\alpha_{\zeta}$
 is not necessarily the $\zeta$-th
 splitting level of the $U_i$'s
 (recall that $\alpha_{\zeta}$ is the $\zeta$-th splitting
 level of the $S_i$'s).
Now let $\{ \beta_\gamma : \gamma < \kappa \} = B$
 be the increasing enumeration of $B$.
Let $\mu : \kappa \to \kappa$
 be the function such
 that
\begin{equation}\label{eq.lmu}
\beta_\gamma = \alpha_{\mu (\gamma)}.
\end{equation}
The function $\mu$ is strictly increasing
 and $\gamma \le \mu(\gamma)$ for all $\gamma<\kappa$.
Using (\ref{eq.lmu}) and substituting into equation (\ref{eq.use}),
we obtain that for any
 given $\gamma \le \nu < \kappa$
 and $\vec{t} = \langle [(\dot{\tau}_i, \beta_\nu)]
 \in U_i(\beta_\nu) : i < d \rangle$,
\begin{equation}
\label{mu_eq}
 p' \forces \dot{c}_{\mu(\gamma)}
 \langle \dot{\tau}_i : i < d \rangle =
 \dot{c}_{\mu(\gamma)} \langle \dot{\tau}_i
 \restriction \beta_{\gamma} : i < d \rangle.
\end{equation}
That is, $p'$ forces that the
 $\dot{c}_{\mu(\gamma)}$-color is determined by
 restricting to level $\beta_\gamma$,
 the $\gamma$-th splitting level of
 the $U_i$'s.

Now let $G$ be $\mbb{P}$-generic over $V$
 such that $q' \in G$.
For each $i < d$, let
 $$W_i = \{ \dot{\tau}_G :
 \dot{\tau} \in \mbox{Names}(U_i) \}.$$
By our comments following Theorem~\ref{mainlemma},
 each $W_i$ is a strong subtree of
 $(T_i)_G$.
Since  $q'$ is in  $G$
 and (\ref{mu_eq}) holds,
 the $(\dot{c}_{\mu(\gamma)})_G$-color of a tuple from
 $\bigcup_{\beta \in B}
 \bigoplus_{i < d} W_i(\beta)$
 is determined by restricting to
 the $\gamma$-th splitting level
 $\beta_\gamma$ of the $W_i$'s.
Thus, the conclusion of the modified
 $\mbox{HL}^{tc}$ holds in $V[G]$.
This completes the proof.
\end{proof}

\begin{remark}
Let $\kappa$ be strongly inaccessible.
There is a unique $\kappa$-saturated
 linear order of size $\kappa$
 denoted $\mbb{Q}_\kappa$,
 the $\kappa$-rationals
 \cite{Zhang17}.
Zhang proved  in \cite{Zhang17} that
 $\mbox{HL}^{tc}(d,{{<}\kappa},\kappa)$
 implies

\begin{equation}
\label{part_rel}
\begin{pmatrix}
 \mbb{Q}_\kappa \\
 \vdots \\
 \mbb{Q}_\kappa
\end{pmatrix}
\rightarrow
\begin{pmatrix}
 \mbb{Q}_\kappa \\
 \vdots \\
 \mbb{Q}_\kappa
\end{pmatrix}
^{\underbrace{1,...,1}_{d+1}}
_{{<\kappa},(d+1)!}
\end{equation}
This partition relation means
 that given any
 $\sigma < \kappa$ and any coloring
 $c : \prod_{i<{d+1}} \mbb{Q}_\kappa \to \sigma$,
 there is a sequence
 $\langle X_i \subseteq \mbb{Q}_\kappa :
 i < d+1 \rangle$,
 where each $X_i$ is
 order isomorphic to $\mbb{Q}_\kappa$,
 such that $$|c `` \prod_{i < d+1} X_i| \le (d+1)!
$$
Thus,  whenever
 $V$ satisfies
 $\mbox{HL}^{tc}(d,{{<}\kappa},\kappa)$,
combining
Theorem~\ref{hltail_preserved}
 with Zhang's result yields that
every forcing
 extension of $V$ by a poset of size less than $\kappa$
 satisfies the partition relation
 (\ref{part_rel}).

In contrast, other partition relations
 are falsified by small forcings.
For example in \cite{Hajnal/Komjath},
 Hajnal and Komj\'{a}th define a fixed poset $\mbb{K}$
 that forces the following
 for sufficiently large $\kappa$:
 $$\mbb{Q}_\kappa
 \not\rightarrow [\mbb{Q}_\kappa]^2_{\omega}.$$
This partition relation means that
 there exsits a coloring
 $c : [\mbb{Q}_\kappa]^2 \to \omega$
 such that
 for any set $X \subseteq \mbb{Q}_\kappa$
 of order type $\mbb{Q}_\kappa$, we have
 $$f `` [X]^2 = \omega.$$
That is, there is a coloring
 of the pairs from
 $\mbb{Q}_\kappa$ using $\omega$ colors
 such that no set order isomorphic to
 $\mbb{Q}_\kappa$ omits any color.

More specifically, Hajnal and Komj\'{a}th
 show that
 that assuming there are no Suslin trees of height $\omega_1$
 (which can be forced by a small forcing),
 then after adding a Cohen real,
 there is a linear ordering $\theta$
 of size $\omega_1$ such that
 for any linear ordering
 $\Omega$, there is a coloring
 $c : [\Omega]^2 \to \omega$ such that
 every subset of $\Omega$ order isomorphic to
 $\theta$ does not omit any color.
\end{remark}


\section{Reflection}\label{sec.reflection}

At inaccessible cardinals,
the Halpern-L\"{a}uchli Theorem  reflects.
In  Proposition \ref{prop5.1}, we show   that for $\kappa$ strongly inaccessible,
if $\mbox{SDHL}$
 holds on a stationary set below $\kappa$,
 then it holds at $\kappa$.
In this proposition,
 \mbox{SDHL} \textit{cannot} be replaced by
 $\mbox{HL}^{tc}$, which we will explain
 in the next paragraph.
In Proposition \ref{measurable_prop},
we prove that SDHL holds  at a measurable cardinal
$\kappa$
if and only if the set of ordinals below $\kappa$  where SDHL holds is a member of any normal ultrafilter on $\kappa$.
By Proposition \ref{prop.HL=SDHL},
the same statement holds  for $\mbox{HL}$.
It also holds for $\mbox{HL}^{tc}$.
These two propositions imply Theorem \ref{thm.kclosepres}, that the Halpern-\Lauchli\ Theorem  at a measurable cardinal $\kappa$ is preserved by ${<}\kappa$-closed forcings.

Let us explain why Proposition~\ref{prop5.1}
 does not hold for $\mbox{HL}^{tc}$.
The problem is we could use the argument of
 Theorem~\ref{thm.con-notwc} in the next section to
 get $\mbox{HL}^{tc}$ to hold at a cardinal that is not weakly compact,
 which is impossible by \cite{Zhang17}.
That is,
 assume Proposition~\ref{prop5.1} does hold for $\mbox{HL}^{tc}$
 and start with $V$ satisfying $\mbox{HL}^{tc}$ at
 a measurable $\kappa$.
Then perform any nontrivial forcing of size less than $\kappa$ to  obtain some generic extension $V[G]$.
In $V[G]$,
 $\kappa$ is still measurable and $\mbox{HL}^{tc}$ holds at $\kappa$.
So in $V[G]$, $\mbox{HL}^{tc}$ holds for a stationary set of
 $\lambda < \kappa$.
Now let $V[G][H]$ be any nontrivial forcing extension of $V[G]$
 by a ${<}\kappa$-closed forcing.
Then in $V[G][H]$, $\mbox{HL}^{tc}$ holds for a stationary
 set of $\lambda < \kappa$.
Since we are assuming Proposition~\ref{prop5.1} holds for $\mbox{HL}^{tc}$,
 then in $V[G][H]$, $\mbox{HL}^{tc}$ holds at $\kappa$.
This is impossible, because by a result of Hamkins
 \cite{Hamkins98} any nontrivial forcing of size less than $\kappa$
 followed by any nontrivial ${<}\kappa$-closed forcing
 causes $\kappa$ to not be weakly compact.

\begin{proposition}\label{prop5.1}
\label{stat_prop}
Let $\kappa$ be a cardinal such that either
\begin{itemize}
\item $\kappa$ is strongly inaccessible or
\item $\cf(\kappa) \ge \omega_1$ and
 $\kappa$ is the limit of strongly inaccessible cardinals.
\end{itemize}
Let $1 \le d < \omega$ and
 $1 \le \sigma < \kappa$, and
assume that $\mbox{SDHL}(d, \sigma, \alpha)$ holds
 for a stationary subset of $\kappa$.
Then $\mbox{SDHL}(d, \sigma, \kappa)$ holds.
\end{proposition}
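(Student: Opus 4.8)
The plan is to prove $\mbox{SDHL}(d,\sigma,\kappa)$ by \emph{reflection}: I will take an arbitrary instance at $\kappa$, push it down to an instance at some $\alpha<\kappa$ lying in the given stationary set, solve it there using $\mbox{SDHL}(d,\sigma,\alpha)$, and observe that a solution at $\alpha$ is automatically a solution at $\kappa$. The crucial structural fact making this work is that a somewhere dense level matrix is a \emph{local} object: it lives between two levels $\gamma<\beta<\kappa$ and refers only to nodes, cones, and domination below level $\beta$.

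Concretely, given regular $\kappa$-trees $\langle T_i:i<d\rangle$ and a coloring $c:\bigotimes_{i<d}T_i\to\sigma$, suppose I can locate an ordinal $\alpha<\kappa$ with $\sigma<\alpha$, with $\alpha$ in the stationary set where $\mbox{SDHL}(d,\sigma,\alpha)$ holds, and such that each $T_i\restriction\alpha:=T_i\cap {}^{<\alpha}\alpha$ is a regular $\alpha$-tree whose level $\beta$ coincides with $T_i(\beta)$ for every $\beta<\alpha$. Then I apply $\mbox{SDHL}(d,\sigma,\alpha)$ to $\langle T_i\restriction\alpha\rangle$ with the restricted coloring $c\restriction\bigotimes_{i<d}(T_i\restriction\alpha)$, producing a monochromatic somewhere dense level matrix $\langle X_i\rangle$ supported on levels $\gamma<\beta<\alpha$. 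Since $T_i\restriction\alpha$ and $T_i$ have identical levels and cones below $\alpha$, the same $\langle X_i\rangle$ dominates $T_i(\gamma+1)\cap\mbox{Cone}(t_i)$ and remains $c$-monochromatic, witnessing $\mbox{SDHL}(d,\sigma,\kappa)$. So the entire problem reduces to finding one good reflection ordinal $\alpha$.

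When $\kappa$ is strongly inaccessible this is a routine closure argument. I define three functions $\kappa\to\kappa$: a function $V$ bounding every entry appearing in a node of length $\le\beta$, a function $L(\beta)=(\sup_i|T_i(\beta)|)^+$, and a function $\mathrm{Sp}(\beta)$ bounding, over all nodes on level $\beta$, the least level carrying a splitting node above them. Each is well defined with values $<\kappa$ precisely because $\kappa$ is regular and every level of each $T_i$ has size $<\kappa$. The set $C$ of limit ordinals $\alpha>\sigma$ closed under $V$, $L$, and $\mathrm{Sp}$ is then a club: closure under $V$ forces $T_i\restriction\alpha$ to have the same levels as $T_i$ below $\alpha$ (hence full height $\alpha$ and no short branches), closure under $L$ keeps its levels of size $<\alpha$, and closure under $\mathrm{Sp}$ makes it perfect, so $T_i\restriction\alpha$ is a regular $\alpha$-tree. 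Intersecting $C$ with the stationary set yields the desired $\alpha$.

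The delicate case, and what I expect to be the main obstacle, is when $\kappa$ is singular, a limit of inaccessibles with $\cf(\kappa)\ge\omega_1$. Here level sizes are an isomorphism invariant and need \emph{not} reflect: for a ``wild'' regular $\kappa$-tree whose levels already grow cofinally in $\kappa$ along an initial segment of order type $\cf(\kappa)$, the closure points of $L$ are bounded below $\kappa$, so there may be no $\alpha$ above $\cf(\kappa)$ at which $T_i\restriction\alpha$ is a genuine regular $\alpha$-tree. The naive club argument therefore breaks, and a plain elementary-submodel reflection is likewise ruled out, since any $M\prec H(\kappa^+)$ with $\kappa\in M$ and $M\cap\kappa$ an ordinal satisfies $M\cap\kappa<\cf(\kappa)$ (as $\cf(\kappa)\in M$ forces a cofinal map into $M$ otherwise). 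The plan is to use the two extra hypotheses essentially: uncountable cofinality permits taking unions along sequences of length $<\cf(\kappa)$ without the bounding data escaping to $\kappa$, while the abundance of inaccessibles is used to situate the reflection ordinal and reduce to the strongly inaccessible case already handled, at suitable inaccessible stages below $\kappa$. Arranging this reduction so that it respects the stationary set $S$ — rather than landing merely in a club that $S$ might avoid — is exactly the crux these hypotheses are designed for, and is where the real work of the proposition lies.
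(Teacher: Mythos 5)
Your first (strongly inaccessible) case is correct and is essentially the paper's own argument: the paper likewise reduces to finding, for each $i<d$, a club $C_i$ of ordinals $\alpha$ at which $T_i\cap{}^{<\alpha}\kappa$ is a regular $\alpha$-tree --- built by an $\omega$-step closure that simultaneously bounds entries, level sizes, and splitting levels, i.e.\ exactly your $V$, $L$, $\mathrm{Sp}$ --- and then intersects $\bigcap_{i<d}C_i$ with the stationary set. The genuine gap is that you never prove the second case ($\cf(\kappa)\ge\omega_1$, $\kappa$ a limit of inaccessibles): your last paragraph is a program, not a proof, and it ends by conceding that the reduction to inaccessible stages ``respecting $S$'' is where the real work lies. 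That work is not done, so an entire case of the proposition is missing. For comparison, the paper performs no such reduction at all: it runs the identical closure construction in both cases, invoking $\cf(\kappa)\ge\omega_1$ only so that the countable sups $\sup_{n<\omega}\lambda_n$ stay below $\kappa$ and so that the club meets $S$ (its assertion that each $\lambda_{n+1}<\kappa$ exists ``by the cardinal assumption'' is where the singular case's real subtleties are buried).

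Moreover, the specific obstacle you cite to justify deferring that case cannot occur, so your plan is aimed at a non-problem. By condition (2) of Definition~\ref{defn.regular}, every node of a regular $\kappa$-tree lies on a branch of length $\kappa$, so every node on level $\beta$ has an extension on each level $\delta>\beta$; distinct nodes have distinct extensions, whence $|T(\beta)|\le|T(\delta)|$ for $\beta\le\delta<\kappa$. Level sizes are therefore non-decreasing, and so $\sup_{\beta<\delta}|T(\beta)|\le|T(\delta)|<\kappa$ for every $\delta<\kappa$: no regular $\kappa$-tree has levels growing cofinally in $\kappa$ along a proper initial segment, and your $L$ is a monotone function $\kappa\to\kappa$ whose closure points form a club even when $\kappa$ is singular (monotonicity plus $\cf(\kappa)\ge\omega_1$). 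What genuinely can fail at singular $\kappa$ are your other two functions: $V$, because the \emph{entries} of nodes can be cofinal in $\kappa$ (take a root whose successors are $\langle\kappa_\eta\rangle$ for an increasing cofinal sequence $\langle\kappa_\eta:\eta<\cf(\kappa)\rangle$, completed to a regular $\kappa$-tree), and $\mathrm{Sp}$, because the least splitting levels above the (at least $\cf(\kappa)$ many) nodes of a single level can be cofinal in $\kappa$. The first is curable by replacing each $T_i$ with a level-preserving isomorphic copy in which each node's successors are relabeled by ordinals below the number of successors --- SDHL is invariant under such isomorphisms, and after relabeling the entries are bounded by the monotone level-size function; the second is isomorphism-invariant and needs a separate device (for instance, a node with a unique successor yields a singleton column of a somewhere dense level matrix, trivializing that coordinate). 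Your proposal contains neither observation, and the misdiagnosis matters: it steers the singular case toward machinery (landing at inaccessible stages while controlling $S$) that neither the paper uses nor the actual difficulties call for, while leaving the statement unproved in that case.
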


\begin{proof}
Let $\langle T_i : i < d \rangle$
 be a sequence of regular $\kappa$-trees
 and let $c : \bigotimes_{i < d} T_i \to \sigma$
 be a coloring.
If we can find an $\alpha < \kappa$ such that each
 $T_i \cap {^{<\alpha}\kappa}$ is
 a regular $\alpha$-tree
 and $\mbox{SDHL}(d, \sigma, \alpha)$ holds,
 then we will be done.
This is because a monochromatic somewhere dense level
 matrix in
 $\bigotimes_{i<d} (T_i \cap {^{<\alpha}\kappa})$,
 is automatically a monochromatic somewhere dense
 level matrix in $\bigotimes_{i<d} T_i$.

Fix $i < d$.
The following standard argument
shows that there is a club
 $C_i \subseteq \kappa$ such that for
  each  $\alpha \in C_i$, the following hold:
\begin{enumerate}
\item
$\alpha$ is a cardinal;
\item
Each level of $T_i \cap {^{<\alpha} \kappa}$ has size less than $\alpha$;
\item
$T_i \cap {^{<\alpha} \kappa}$ is perfect.
\end{enumerate}

Let $\alpha_0=0$ be the least member of $C_i$.
Given $\alpha_{\gamma}$, the $\gamma$-th member   of $C_i$,
construct $\alpha_{\gamma+1}$ as follows.
Let $\lambda_0=\alpha_{\gamma}$.
Given
$\lambda_n$ for $n<\omega$, let $\lambda_{n+1}$ be the least cardinal above $\lambda_n$ such that
both (i) and (ii) hold:
 (i) For each $\beta<\lambda_n$,
$T_i\cap{}^{\beta}\kappa$  is contained in
${}^{\beta}\lambda_{n+1}$ and
has cardinality less than
$ \lambda_{n+1}$.
(ii) For each $t\in T_i\cap {}^{\lambda_n}{\kappa}$,
there are at least two incomparable extensions of $t$ in
$T_i\cap {}^{<\lambda_{n+1}}\kappa$.
Note that $\lambda_{n+1} < \kappa$
 by the cardinal assumption on $\kappa$.
Now define $\alpha_{\gamma+1}= \sup_{n < \omega} \lambda_n$.
Since $\cf(\kappa) \ge \omega_1$
 we have
 $\alpha_{\gamma+1} < \kappa$.
By the construction, $\alpha_{\gamma+1}$ satisfies (1) - (3).

Given a limit ordinal  $\gamma<\kappa$  and
 the increasing sequence $\langle \alpha_{\zeta}:\zeta<\gamma\rangle$,
 define $\alpha_{\gamma}=\sup_{\zeta<\gamma}\alpha_{\zeta}$.
Note that $\alpha_\gamma$ automatically satisfies (1) - (3).
Thus, given any $\alpha \in C_i$,
 $T_i \cap {^{<\alpha} \kappa}$ is a regular $\alpha$-tree.
This defines $C_i$ as desired, and it is clear that $C_i$ is club.

Let $S$ be a stationary subset of  $\kappa$ such that   SDHL$(d,\sigma, \alpha)$ holds for each $\alpha\in S$.
The set
 $\bigcap_{i < d}  C_i$
 is a club subset of $\kappa$, so it must intersect $S$
 (here we are using that $\cf(\kappa) \ge \omega_1$).
Take any $\alpha < \kappa$ in the intersection.
Then
$\alpha$ is a cardinal,
$T_i \cap {^{<\alpha}\kappa}$ is a regular $\alpha$-tree
 for each $i < d$,
 and SDHL$(d,\sigma,\kappa)$ holds,
 which is what we intended to show.
\end{proof}

For measurable cardinals,
 we have an even stronger form of reflection
 using a normal ultrafilter.
The same argument works for $\mbox{HL}$
 and $\mbox{HL}^{tc}$,
 because these are all statements about
 $V_{\kappa+1}$.

\begin{proposition}\label{measurable_prop}
Let $\kappa$ be a measurable cardinal and
 $\mc{U}$ be a normal ultrafilter on $\kappa$.
Then $$\mbox{SDHL}(d,\sigma,\kappa)
 \Leftrightarrow
 \{ \alpha < \kappa :
 \mbox{SDHL}(d,\sigma,\alpha) \} \in \mc{U}.$$
\end{proposition}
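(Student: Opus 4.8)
The plan is to exploit the elementary embedding $j : V \to M$ associated with $\mc{U}$ and the fact, noted just before the statement, that $\mbox{SDHL}(d,\sigma,\kappa)$ is a statement about $V_{\kappa+1}$. Recall that for a normal ultrafilter $\mc{U}$ on $\kappa$, a set $A \subseteq \kappa$ lies in $\mc{U}$ if and only if $\kappa \in j(A)$, where $j$ is the ultrapower embedding. So I would reformulate the right-hand side: writing $A = \{ \alpha < \kappa : \mbox{SDHL}(d,\sigma,\alpha) \}$, membership $A \in \mc{U}$ is equivalent to $\kappa \in j(A)$, which by \L o\'s's theorem and elementarity says precisely that $M \models \mbox{SDHL}(d, j(\sigma), \kappa)$ (using $j(\kappa) > \kappa$, and noting $\sigma < \kappa$ so that $j(\sigma) = \sigma$ and $j(d) = d$ since these are small). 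Thus the whole equivalence reduces to showing
\begin{equation}
V \models \mbox{SDHL}(d,\sigma,\kappa) \Longleftrightarrow M \models \mbox{SDHL}(d,\sigma,\kappa).
\end{equation}

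The key observation driving this is that $\mbox{SDHL}(d,\sigma,\kappa)$ quantifies only over objects in $V_{\kappa+1}$: the regular $\kappa$-trees $T_i$, the colorings $c$, and the candidate somewhere dense level matrices $\langle X_i : i < d \rangle$ all live in $V_{\kappa+1}$. Since $\kappa$ is measurable, the ultrapower $M = \mbox{Ult}(V, \mc{U})$ is closed under $\kappa$-sequences, so $V_{\kappa+1}^M = V_{\kappa+1}^V$; that is, $V$ and $M$ have exactly the same subsets of $V_\kappa$, the same trees on ${^{<\kappa}\kappa}$, and the same colorings and level matrices. I would spell this out by checking that each of the three clauses in the definition of a regular $\kappa$-tree, the notion of a somewhere dense level matrix, and the condition of being monochromatic are all absolute between $V$ and $M$, precisely because all the witnessing objects are elements of the common $V_{\kappa+1}$. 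Since the statement is $\Sigma$-over-$V_{\kappa+1}$ together with quantifiers ranging over that same set, and the two models agree on $V_{\kappa+1}$, the biconditional follows directly.

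The main obstacle, modest as it is, lies in handling the parameters $d$ and $\sigma$ correctly under $j$ and confirming that $j(\sigma) = \sigma$; this uses $\sigma < \kappa$ and the fact that $j$ fixes every ordinal below $\kappa$ (indeed $j \restriction V_\kappa = \mbox{id}$), so that $j(\mbox{SDHL}(d,\sigma,\alpha))$ applied at $\alpha = \kappa$ really is $\mbox{SDHL}(d,\sigma,\kappa)$ and not a relation with shifted parameters. The remaining work is then purely the absoluteness bookkeeping: one verifies that ``$T$ is a regular $\kappa$-tree'' and ``$\langle X_i \rangle$ is a $c$-monochromatic somewhere dense level matrix'' are decided identically in $V$ and $M$ because these are first-order properties of elements of $V_{\kappa+1}$, which is shared. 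I would close by noting that the argument used nothing specific to $\mbox{SDHL}$ beyond its being a statement about $V_{\kappa+1}$, so the identical proof yields the corresponding equivalences for $\mbox{HL}(d,\sigma,\kappa)$ and for $\mbox{HL}^{tc}(d,{<\kappa},\kappa)$, as remarked preceding the proposition.
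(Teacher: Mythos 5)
Your proposal is correct and takes essentially the same approach as the paper: both pass to the ultrapower embedding $j : V \to M$, use \L{}os's Theorem together with normality to identify membership of $\{\alpha < \kappa : \mbox{SDHL}(d,\sigma,\alpha)\}$ in $\mc{U}$ with $\mbox{SDHL}(d,\sigma,\kappa)$ holding in $M$, and then transfer between $M$ and $V$ using the agreement $V_{\kappa+1} \subseteq M$. The additional bookkeeping you carry out (that $j$ fixes $d$ and $\sigma$, and that the quantifiers of SDHL range only over the shared $V_{\kappa+1}$) is precisely what the paper's two displayed equivalences compress, so there is no substantive difference.
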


\begin{proof}
Let $j : V \to M$ be the ultrapower embedding
 coming from $\mc{U}$.
Since $V_{\kappa+1} \subseteq M$,
 $$\mbox{SDHL}(d,\sigma,\kappa) \Leftrightarrow
 \mbox{SDHL}(d,\sigma,\kappa)^M.$$
By \L{}os's Theorem,
 $$\mbox{SDHL}(d,\sigma,\kappa)^M \Leftrightarrow
 \{ \alpha < \kappa :
 \mbox{SDHL}(d,\sigma,\alpha) \} \in \mc{U}.$$
\end{proof}

\begin{theorem}\label{thm.kclosepres}
\label{less_then_kappa_closed}
Suppose $\kappa$ is a measurable cardinal.
Let $1 \le d < \omega$ and
 $1 \le \sigma < \kappa$ be given, and
assume $\mbox{SDHL}(d,\sigma,\kappa)$ holds.
If $\mbb{P}$ preserves stationary subsets of $\kappa$
 and adds no new bounded subsets of $\kappa$,
 then $\mbox{SDHL}(d,\sigma,\kappa)$ holds
 after forcing with $\mbb{P}$.
In particular, if
 $\mbb{P}$ is
 ${<}\kappa$-closed,
 then $\mbox{SDHL}(d,\sigma,\kappa)$ holds
 after forcing with $\mbb{P}$.
\end{theorem}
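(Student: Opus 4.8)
The plan is to derive the theorem from the two reflection results, Proposition~\ref{measurable_prop} and Proposition~\ref{prop5.1}, by transporting reflected instances of $\mbox{SDHL}$ into the extension. Fix a normal ultrafilter $\mc{U}$ on $\kappa$ in $V$. Since $\mbox{SDHL}(d,\sigma,\kappa)$ holds in $V$, Proposition~\ref{measurable_prop} gives that the set
\begin{equation}
S = \{ \alpha < \kappa : \mbox{SDHL}(d,\sigma,\alpha) \}
\end{equation}
lies in $\mc{U}$; in particular $S$ is stationary in $V$, since the club filter on $\kappa$ is contained in $\mc{U}$. Let $G$ be $\mbb{P}$-generic over $V$. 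The goal is to verify the hypotheses of Proposition~\ref{prop5.1} in $V[G]$ for the cardinal $\kappa$ with stationary witness $S$, and then to apply that proposition \emph{inside} $V[G]$ to conclude $\mbox{SDHL}(d,\sigma,\kappa)$ there.

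First I would check that the reflected instances survive. Each statement $\mbox{SDHL}(d,\sigma,\alpha)$ for $\alpha < \kappa$ is, as noted in Section~\ref{sec.2}, a statement about $V_{\alpha+1}$. Because $\mbb{P}$ adds no new bounded subsets of $\kappa$ and $\kappa$ is strongly inaccessible in $V$, a routine transfinite induction shows $V_{\alpha+1}^{V[G]} = V_{\alpha+1}^V$ for every $\alpha < \kappa$: at successor stages $\mc{P}(V_\beta)$ is unchanged because $|V_\beta| < \kappa$ lets every subset of $V_\beta$ be coded by a bounded subset of $\kappa$. Hence $\mbox{SDHL}(d,\sigma,\alpha)$ continues to hold in $V[G]$ for every $\alpha \in S$. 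Moreover, since $\mbb{P}$ preserves stationary subsets of $\kappa$, the set $S$ remains stationary in $V[G]$.

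The main point to establish, and the step I expect to be the chief obstacle, is that $\kappa$ is still strongly inaccessible in $V[G]$, since this is what licenses the use of Proposition~\ref{prop5.1} in the extension. That $\kappa$ remains a strong limit and that all cardinals below $\kappa$ are preserved follows again from $\mbb{P}$ adding no new bounded subsets of $\kappa$: any collapse of a cardinal below $\kappa$, or any new subset of some $\alpha<\kappa$, would be a new bounded subset. The delicate part is regularity, and here the stationary-preservation hypothesis is essential. In $V$, split $\kappa$ into $\kappa$ pairwise disjoint stationary sets $\langle S_\beta : \beta < \kappa\rangle$ (Ulam--Solovay); each $S_\beta$ remains stationary in $V[G]$. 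If $\kappa$ were singular or collapsed in $V[G]$, there would be a closed cofinal $C \subseteq \kappa$ of order type, hence cardinality, strictly less than $\kappa$; but $C$ must meet each of the $\kappa$ many pairwise disjoint sets $S_\beta$, producing $\kappa$ many distinct elements of $C$, a contradiction. Thus $\kappa$ is regular, and therefore strongly inaccessible, in $V[G]$. Applying Proposition~\ref{prop5.1} in $V[G]$ to $\kappa$ and the stationary set $S$ then yields $\mbox{SDHL}(d,\sigma,\kappa)$ in $V[G]$, as desired.

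Finally, for the ``in particular'' clause, it remains to note that any ${<}\kappa$-closed forcing satisfies both hypotheses: it adds no new ${<}\kappa$-sequences of ground-model ordinals, so in particular no new bounded subsets of $\kappa$, and it is standard that ${<}\kappa$-closed forcings preserve stationary subsets of $\kappa$. Everything outside the Ulam--Solovay regularity argument is bookkeeping with the two reflection propositions together with the absoluteness of $V_{\alpha+1}$ under $\mbb{P}$.
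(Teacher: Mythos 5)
Your proposal is correct and follows essentially the same route as the paper: apply Proposition~\ref{measurable_prop} to get the reflected set $S$ into $\mc{U}$ (hence stationary), use stationary preservation plus the absoluteness of $V_{\alpha+1}$ (from no new bounded subsets of $\kappa$) to keep $S$ a stationary set of reflection points in $V[G]$, and then invoke Proposition~\ref{prop5.1} in the extension. Your Ulam--Solovay argument that $\kappa$ remains regular (hence strongly inaccessible) in $V[G]$ is a worthwhile extra verification that the paper leaves implicit, since Proposition~\ref{prop5.1} does require its cardinal hypothesis to hold in $V[G]$.
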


\begin{proof}
As ${<}\kappa$-closed forcings preserve stationary subsets of $\kappa$ and add no new bounded subsets of $\kappa$,
 we need only prove the first half of the theorem.

Fix a normal ultrafilter $\mc{U}$ on $\kappa$.
Since $\mbox{SDHL}(d,\sigma,\kappa)$ holds,
 by Proposition~\ref{measurable_prop}
 the set
 $$S = \{ \alpha < \kappa :
 \mbox{SDHL}(d,\sigma,\alpha) \}$$
 is in $\mc{U}$.
Hence, $S$ is stationary.
Since $\mbb{P}$  preserves stationary
 subsets of $\kappa$,
 $1 \forces \check{S}$ is stationary.
For $\alpha < \kappa$,
 since $\mbox{SDHL}(d,\sigma,\alpha)$
 is a statement about $V_{\alpha+1}$,
 and $V_{\alpha+1}$ is the same when computed
 in the forcing extension by $\mbb{P}$,
 we have that $\mbb{P}$ does not change the truth value of
 $\mbox{SDHL}(d,\sigma,\alpha)$ for any $\alpha < \kappa$.
So,
 \begin{equation}
1 \forces \{ \alpha < \check{\kappa} :
 \mbox{SDHL}(\check{d},\check{\sigma},\alpha) \}
 \mbox{ is stationary}.
\end{equation}
By Proposition~\ref{stat_prop},
 $\mbb{P}$ forces that $\mbox{SDHL}
 (\check{d},\check{\sigma},\check{\kappa})$ holds.

\end{proof}


\section{SDHL at a Cardinal That is Not Weakly Compact}\label{sec.SDHLnotwc}

In \cite{Dobrinen/Hathaway17}, we proved that
 SDHL$(1,k,\kappa)$ holds for all finite $k$
 and all infinite cardinals $\kappa$.
So, by the equivalence of SDHL and HL for all
  strongly inaccessible cardinals $\kappa$,
  HL$(1,k,\kappa)$ holds for every strongly inaccessible
  $\kappa$ and every finite $k$.
In \cite{Zhang17},
 Zhang showed that this can be improved to
 ${HL}^{asym}(1,\sigma,\kappa)$ holding for \textit{all}
 $\sigma < \kappa$, but he required $\kappa$
 to be weakly compact.
So, it is natural to wonder whether
 $\kappa > \omega$ needs to be weakly compact in order for
 $\mbox{HL}(2,\sigma,\kappa)$
 to hold for \textit{all} $\sigma < \kappa$.

While we were writing \cite{Dobrinen/Hathaway17}
 we discovered the derived tree theorem
 and the proof in this section,
 which answers the question in the negative.
In the meantime,
 Zhang discovered a different proof
 of the consistency of
 $(\forall \sigma < \kappa)\,
 \mbox{HL}(2,\sigma,\kappa)$
 for a $\kappa$ that is \textit{not} weakly compact.
Specifically,
 in Theorem 5.8 of \cite{Zhang17}
 he proved that if for all $d < \omega$,
 $\kappa$ is measurable whenever one adds
 $\kappa^{+d}$ many Cohen subsets of $\kappa$,
 then there is a forcing extension in which $\kappa$
 is inaccessible but not weakly compact,
 and in which HL$(d,\sigma,\kappa)$
 holds for all $d < \omega$
 and all $\sigma < \kappa$.
The theorem we will present now
 implies this, but has a different proof and applies to a broad collection of forcings.

\begin{definition}
For $1 \le d < \omega$
 and an infinite cardinal $\kappa$,
 $\Psi_{d,\kappa}$ is the statement
 $$(\forall \sigma < \kappa)\, \mbox{HL}(d,\sigma,\kappa).$$
\end{definition}

In \cite{Dobrinen/Hathaway17} we showed the following:
\begin{theorem}
Let $1 \le d < \omega$.
If $\kappa$ is measurable whenever one adds
 $\kappa^{+d}$ many Cohen subsets of $\kappa$,
 then $\Psi_{d,\kappa}$ holds (in V).
\end{theorem}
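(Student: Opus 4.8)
The plan is to adapt Harrington's forcing proof of the Halpern--L\"auchli Theorem to $\kappa$, in the style of Shelah \cite{Shelah91} (see also \cite{Dobrinen/Hathaway17} and Chapter~3 of \cite{TodorcevicBK10}), using the hypothesis as the \emph{only} source of an elementary embedding. First I would record that $\kappa$ is strongly inaccessible in $V$: the forcing $\mbox{Add}(\kappa,\kappa^{+d})$ is ${<}\kappa$-closed, so it adds no bounded subsets of $\kappa$, and hence the regularity and strong limit property of $\kappa$ descend from the extension, where $\kappa$ is even measurable. By Proposition~\ref{prop.HL=SDHL} it therefore suffices to fix $\sigma<\kappa$, a sequence $\langle T_i : i<d\rangle$ of regular $\kappa$-trees, and a coloring $c:\bigotimes_{i<d}T_i\to\sigma$, and to produce \emph{in $V$} strong subtrees $\langle T_i' : i<d\rangle$ with a common set $A$ of splitting levels on which $c$ is monochromatic.

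Next I would introduce the level-product forcing $\mbb{Q}=\bigotimes_{i<d}T_i$, ordered by coordinatewise end-extension, which generically adds a tuple $\langle b_i : i<d\rangle$ of branches climbing the trees at a common rate. Since each $T_i$ is regular, $\mbb{Q}$ is ${<}\kappa$-closed and of size $\kappa$, and a cardinal-arithmetic computation shows it regularly embeds into the Cohen forcing $\mbb{C}=\mbox{Add}(\kappa,\kappa^{+d})$; thus the branches $b_i$ live in the Cohen extension $V[G]$. Working in $V[G]$, where $\kappa$ is measurable by hypothesis, I would fix a normal measure and its ultrapower $j:V[G]\to N$ with $\mbox{crit}(j)=\kappa$ and $V[G]_{\kappa+1}\subseteq N$. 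Because $j$ fixes every node of each $T_i$ of length ${<}\kappa$, each branch $b_i$ is the union of its length-${<}\kappa$ initial segments, all lying in the regular $j(\kappa)$-tree $j(T_i)$, so $b_i$ is a node on level $\kappa$ of $j(T_i)$. Hence $\langle b_i : i<d\rangle$ is a level-$\kappa$ tuple of $\bigotimes_{i<d}j(T_i)$ and receives some color $j(c)\langle b_i : i<d\rangle=\gamma^{*}<j(\sigma)=\sigma$.

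The heart of the argument is to homogenize at this value and then descend to $V$. Following Harrington, I would color the conditions of $\mbb{Q}$ directly: to each level tuple $\vec{t}\in\bigotimes_{i<d}T_i$ I attach the color that $\vec{t}$, as a condition, forces the eventual color $j(c)\langle b_i\rangle$ to be. Using normality of the measure and \L{}o\'s's Theorem, I would then thin the trees level by level, keeping at each chosen splitting level \emph{all} successors whose induced tuples are forced to carry the value $\gamma^{*}$; carrying this recursion through a single cofinal $A\subseteq\kappa$ common to all coordinates yields the strong subtrees $T_i'$. The exponent $d$ enters here: one peels off the trees one at a time, spending a factor of $\kappa^{+}$ per tree, which is exactly why $\kappa^{+d}$ Cohen coordinates are needed to accommodate the $d$ mutually generic branch systems. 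Crucially, every choice in this recursion is made through the forcing relation $\forces_{\mbb{C}}$ and a fixed $\mbb{C}$-name for the embedding $j$, both of which are definable in $V$ from the ground-model data $\mbb{C}$, $\langle T_i\rangle$, $c$, and the name for the measure; consequently $A$ and the $T_i'$ are constructed \emph{inside $V$}, even though $\kappa$ need not be weakly compact there.

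I expect the main obstacle to be the passage between the single ``tail'' color $\gamma^{*}$ of $j(c)$ at level $\kappa$ and the actual values of $c$ on the finite splitting-level tuples that are kept in the $T_i'$: it is the normal measure, via a Fubini/\L{}o\'s reflection, that forces these local colors to agree with $\gamma^{*}$, and getting this to hold simultaneously for \emph{all} kept successors at a common set $A$ of splitting levels for all $d$ trees is the delicate combinatorial core. The secondary obstacle is largely a matter of discipline but is what makes the theorem nontrivial: the entire recursion must be read off from $\forces_{\mbb{C}}$ rather than from any actual generic, so that the homogeneous object produced is a genuine member of $V$ --- which is precisely the content of ``$\Psi_{d,\kappa}$ holds in $V$''.
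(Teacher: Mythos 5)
First, a remark on the comparison target: the paper does not actually prove this theorem --- it is stated as a quotation of a result from \cite{Dobrinen/Hathaway17} --- so your proposal must be measured against the proof given there, which is indeed a Harrington-style forcing argument exploiting the measurability of $\kappa$ in the $\mbox{Add}(\kappa,\kappa^{+d})$ extension. Your opening moves match that strategy and are sound ($\kappa$ is inaccessible in $V$; the folklore complete embedding of a ${<}\kappa$-closed atomless poset of size $\kappa$ into the Cohen forcing; each $b_i$ is a level-$\kappa$ node of $j(T_i)$ and $\gamma^{*}<\sigma$). The fatal gap is in the middle. Your forcing $\mbb{Q}=\bigotimes_{i<d}T_i$ adds a \emph{single} branch through each tree, and everything downstream rests on the one value $\gamma^{*}=j(c)\langle b_i : i<d\rangle$. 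Normality together with \L{}os's Theorem then yields exactly one fact: the set of $\alpha$ with $c\langle b_i\restriction\alpha : i<d\rangle=\gamma^{*}$ is measure one --- a statement about the \emph{diagonal} tuples lying along this one branch tuple. But $\mbox{HL}$ demands strong subtrees $T_i'$ containing \emph{every} immediate successor of every node at every splitting level, with $c$ constant on all of $\bigotimes_{i<d}T_i'(\alpha)$ for $\alpha\in A$, i.e.\ on all cross combinations of many distinct nodes per level per tree. A condition of $\mbb{Q}$ is one node per tree, so the forcing relation of $\mbb{Q}$ (equivalently, of its copy inside $\mbb{C}$) cannot even express, let alone decide, the color of a tuple that mixes two different successors of the same node in the same tree: the single generic branch passes through at most one of them. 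Hence your instruction to ``keep at each splitting level all successors whose induced tuples are forced to carry $\gamma^{*}$'' has no content for the off-branch tuples, and the recursion cannot get started. This is precisely why Harrington's argument, and its adaptation in \cite{Dobrinen/Hathaway17}, introduce a separate generic branch name $\dot{b}_{i,\delta}$ through $T_i$ for \emph{each} index $\delta<\kappa^{+d}$: in the $V$-construction every node retained in $T_i'$ is routed through its own branch name, mutual genericity makes every cross selection of branch names a generic tuple, and a pigeonhole/$\Delta$-system argument over the index set --- carried out one tree at a time, consuming one cardinal successor per tree, which is where $\kappa^{+d}$ is genuinely spent --- produces a single condition and a single color working simultaneously for all selections. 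In your write-up the $\kappa^{+d}$ coordinates are never used at all ($\mbb{Q}$ has size $\kappa$), so your parenthetical about ``spending a factor of $\kappa^{+}$ per tree'' does not correspond to any step your construction can actually perform.

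A secondary but related defect: your final paragraph asserts, rather than proves, that the thinning recursion ``is made through the forcing relation $\forces_{\mbb{C}}$'' and therefore lands in $V$. With a single branch tuple this is exactly the point that fails; and even in the correct many-branch setting it needs care, because a condition does not decide the ordinal $j(c)\langle \dot{b}_i : i<d\rangle$ (it is a name whose value depends on the entire generic). The usable currency in \cite{Dobrinen/Hathaway17} is statements of the form ``$p$ forces that $\{\alpha<\kappa : c\langle \dot{b}_{i,\delta_i}\restriction\alpha : i<d\rangle=\check{\gamma}\}$ belongs to $\dot{\mc{U}}$,'' where $\dot{\mc{U}}$ is a name for the normal measure, and reconciling the pairs $(p,\gamma)$ attached to different index tuples $\vec{\delta}\in[\kappa^{+d}]^{d}$ is the combinatorial core of the theorem, not a bookkeeping afterthought. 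Until your argument is re-based on $\kappa^{+d}$ many mutually generic branch names per tree (or an equivalent device), it does not go through.
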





\begin{theorem}\label{thm.con-notwc}
Let $1 \le d < \omega$
 and $\kappa$ be measurable.
Assume $\Psi_{d,\kappa}$ holds.
Then
any non-trivial forcing of size less than $\kappa$
 followed by a non-trivial ${<}\kappa$-closed
 forcing produces a model in which
$\kappa$ is not weakly compact and
 $\Psi_{d,\kappa}$ holds.
\end{theorem}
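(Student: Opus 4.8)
The plan is to assemble three preservation facts from the preceding sections together with Hamkins' theorem, rather than to carry out any new tree combinatorics. Write $\mbb{P}$ for the non-trivial forcing of size ${<}\kappa$, let $G$ be $\mbb{P}$-generic over $V$, let $\mbb{Q}\in V[G]$ be the non-trivial ${<}\kappa$-closed forcing, and let $H$ be $\mbb{Q}$-generic over $V[G]$. The goal is to show that in $V[G][H]$ the cardinal $\kappa$ is not weakly compact while $\Psi_{d,\kappa}$ still holds.

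The failure of weak compactness is immediate from Hamkins' result \cite{Hamkins98}: any non-trivial forcing of size less than $\kappa$ followed by any non-trivial ${<}\kappa$-closed forcing destroys the weak compactness of $\kappa$. For the positive half I first record how the relevant properties of $\kappa$ propagate. Since $|\mbb{P}|<\kappa$, the forcing $\mbb{P}$ preserves the inaccessibility of $\kappa$ and, by the Levy--Solovay theorem, its measurability; since $\mbb{Q}$ is ${<}\kappa$-closed it adds no new bounded subsets of $\kappa$ and hence also preserves inaccessibility. Thus $\kappa$ is inaccessible in each of $V$, $V[G]$, and $V[G][H]$, so Proposition~\ref{prop.HL=SDHL} lets me freely identify $\Psi_{d,\kappa}$ with ``$(\forall\sigma<\kappa)\,\mbox{SDHL}(d,\sigma,\kappa)$'' in every one of these models.

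The preservation of $\Psi_{d,\kappa}$ then proceeds in two stages, one per forcing. In $V$ the hypothesis $\Psi_{d,\kappa}$ gives $(\forall\sigma<\kappa)\,\mbox{SDHL}(d,\sigma,\kappa)$, and by Theorem~\ref{thm.4.1} this statement is preserved by every forcing of size ${<}\kappa$; hence it holds in $V[G]$. In $V[G]$ the cardinal $\kappa$ is still measurable and $\mbox{SDHL}(d,\sigma,\kappa)$ holds for every $\sigma<\kappa$, so applying Theorem~\ref{thm.kclosepres} with the ${<}\kappa$-closed forcing $\mbb{Q}$, once for each fixed $\sigma<\kappa$, yields $\mbox{SDHL}(d,\sigma,\kappa)$ in $V[G][H]$ for all such $\sigma$. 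By Proposition~\ref{prop.HL=SDHL} this is exactly $\Psi_{d,\kappa}$ in $V[G][H]$, completing the argument.

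The one genuinely delicate point --- and the step I would be most careful about --- is the bookkeeping of which large-cardinal property is available in which model. Theorem~\ref{thm.kclosepres} requires $\kappa$ to be \emph{measurable} in the model over which the closed forcing $\mbb{Q}$ is performed, and this measurability is supplied not by the original hypothesis but by Levy--Solovay preservation through the small forcing $\mbb{P}$. Everything else is a direct invocation of the preservation theorems of Sections~\ref{sec.4} and~\ref{sec.reflection}, so no new combinatorial work is needed.
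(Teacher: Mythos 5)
Your proof is correct and follows essentially the same route as the paper's: Hamkins' theorem for the failure of weak compactness, Theorem~\ref{thm.4.1} to push $\Psi_{d,\kappa}$ through the small forcing, and Theorem~\ref{thm.kclosepres} (using measurability of $\kappa$ in $V[G]$) to push it through the ${<}\kappa$-closed forcing. In fact you are more explicit than the paper about two points it leaves implicit --- the Levy--Solovay argument giving measurability in $V[G]$, and the use of Proposition~\ref{prop.HL=SDHL} to pass between $\Psi_{d,\kappa}$ and its SDHL form in each model --- so no gap remains.
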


\begin{proof}
By a theorem of Hamkins
 \cite{Hamkins98},
 any non-trivial forcing of size less than $\kappa$
 followed by a non-trivial ${<}\kappa$-closed
 forcing will force $\kappa$  to \textit{not} be weakly compact.

Let $\mbb{P}$ be  any non-trivial forcing of size $<\kappa$.
Let $G$ be $\mbb{P}$-generic over $V$.
Then $\Psi_{d,\kappa}$ holds in $V[G]$
 by Theorem~\ref{sdhl_is_preserved}.
Let $\mbb{Q}$ be any non-trivial ${<}\kappa$-closed forcing
 in $V[G]$, and let $H$ be $\mbb{Q}$-generic over $V[G]$.
By Hamkins's result, $\kappa$ is not weakly compact
 in $V[G][H]$.
Since $\Psi_{d,\kappa}$ holds in
 $V[G]$
 and $\kappa$ is measurable in this model, it follows from
 Theorem~\ref{less_then_kappa_closed}
 that
 $\Psi_{d,\kappa}$ also holds in
 $V[G][H]$.
\end{proof}


\section{Open Problems}\label{conclusion}

The main open problem concerning the
 Halpern-L\"auchli Theorem at uncountable cardinals
 is the following:
\begin{question}
Is it consistent for
 $\mbox{HL}(d,\sigma,\kappa)$ to fail for some uncountable cardinal $\kappa$?
\end{question}

Because this is unanswered,
 there are many secondary questions.
For example,
 even though $\mbox{HL}(d,\sigma,\kappa)$  does not imply $\kappa$ itself
 must  be weakly compact,
 does it have any large cardinal
 strength?
Does HL have so much large cardinal
 strength that it cannot hold in L; or
 does HL always hold in L?
Does  the existence of  say $0^\#$  imply
 that within $L$,
 HL holds at some, or all, strongly inaccessible cardinals?

In  all known models  in which $\mbox{HL}(d,\sigma,\kappa)$ holds for some strongly inaccessible $\kappa$, GCH fails.
Such models appear in \cite{Shelah91},
\cite{Dzamonja/Larson/MitchellQ09},
\cite{Dzamonja/Larson/MitchellRado09},
\cite{Dobrinen/Hathaway17},
\cite{Zhang17} and the preceding sections of this article.
Is $\mbox{HL}(d,\sigma,\kappa)$ for $\kappa$ strongly inaccessible  consistent with GCH?

In this article, we showed that various forms of HL are preserved by small forcings or by ${<}\kappa$-closed forcings.
What other types of forcings preserve HL?
An obvious question is the following:
\begin{question}
Do $\kappa$-c.c.\ forcings preserve HL$(d,\sigma,\kappa)$, for $0<d<\omega$ and $0<\sigma<\kappa$?
\end{question}

Many variants of these questions can be formulated, and progress on any of them will lead to a better understanding of Halpern-L\"{a}uchli Theorems and associated partition relations on uncountable structures.

\bibliographystyle{amsplain}
\bibliography{DobrinenHathawayHL2_may4_2019}

\end{document}